\documentclass{article}

\usepackage{a4}
\usepackage{paralist}

\title{Robust space-time finite element error estimates\\
for parabolic distributed optimal control problems \\ 
with energy regularization} 
\author{Ulrich~Langer\footnote{Institute of Computational Mathematics,
    Johannes Kepler University Linz, 
    Altenberger Stra{\ss}e 69, 4040 Linz, Austria,
    Email: ulanger@numa.uni-linz.ac.at},
    \; Olaf~Steinbach\footnote{Institut f\"{u}r Angewandte Mathematik,
    Technische Universit\"{a}t Graz, Steyrergasse 30, 8010 Graz, Austria,
    Email: o.steinbach@tugraz.at}, 
    \; Huidong~Yang\footnote{Institute of Science and Technology Austria
      (ISTA), Am Campus 1, 3400 Klosterneuburg, Austria,
      Email: huidong.yang@ist.ac.at} 
}  

\date{\today}

\usepackage{amsmath}
\usepackage{amsthm}
\usepackage{amsfonts}
\usepackage{booktabs}
\usepackage{graphicx}
\usepackage{diagbox}

\usepackage{multirow}
\usepackage{hyperref}

\usepackage[ulem=normalem,draft]{changes}


\newtheorem{theorem}{Theorem}
\newtheorem{lemma}{Lemma}
\newtheorem{cor}{Corollary}

\setlength{\voffset}{-10mm}

\numberwithin{equation}{section} 

\begin{document}

\maketitle

\begin{abstract}
We consider space-time tracking optimal control problems 
for  linear para\-bo\-lic initial boundary value problems
that are given in the space-time cylinder $Q = \Omega \times (0,T)$, 
and that are controlled by the right-hand side $z_\varrho$ from
the Bochner space $L^2(0,T;H^{-1}(\Omega))$.
So it is natural to replace the usual $L^2(Q)$ norm regularization by 
the energy regularization in the $L^2(0,T;H^{-1}(\Omega))$ norm.
We derive a priori estimates for the error 
$\|\widetilde{u}_{\varrho h} - \overline{u}\|_{L^2(Q)}$ 
between the computed state $\widetilde{u}_{\varrho h}$ and the desired
state $\overline{u}$ in terms of the regularization parameter $\varrho$
and the space-time finite element mesh-size $h$,
and depending on the regularity of the desired state $\overline{u}$.
These estimates lead to the optimal choice $\varrho = h^2$.
The approximate state $\widetilde{u}_{\varrho h}$ is computed by means of
a space-time finite element method using piecewise linear
and continuous basis functions
on completely unstructured simplicial meshes for $Q$. The theoretical
results are quantitatively illustrated 
by a series of numerical examples in two and three space dimensions.
\end{abstract} 

\begin{keywords}
  Parabolic optimal control problems, energy regularization, \\
  space-time finite element methods, error estimates.
\end{keywords}

\begin{msc}
49J20,  
49M05,  
35K20,  
65M60,  
65M15   
\end{msc}

%
%
\section{Introduction}
\label{sec:Introduction}
As in \cite{LSY:LangerSteinbachTroeltzschYang:2021c}, we  consider the
minimization of the  space-time tracking cost functional
\begin{equation}\label{Parabolic minimization problem}
  {\mathcal{J}}(u_\varrho,z_\varrho) = \frac{1}{2} \int_0^T \int_\Omega
  [u_\varrho(x,t) - \overline{u}(x,t)]^2 \, dx \, dt +
  \frac{1}{2} \, \varrho \, \| z_\varrho \|^2_{L^2(0,T;H^{-1}(\Omega))}
\end{equation}
with respect to the state $u_\varrho$ and the control $z_\varrho$
subject to the model parabolic initial boundary value problem
\begin{equation}\label{Parabolic PDE}
  \left. \begin{array}{rclcl}
           \partial_t u_\varrho(x,t) - \Delta_x u_\varrho(x,t)
           & = & z_\varrho(x,t) & \quad &
           \mbox{for} \; (x,t) \in Q := \Omega \times (0,T), \\[1mm]
           u_\varrho(x,t) & = & 0 & &
           \mbox{for} \; (x,t) \in \Sigma:=\partial\Omega
           \times (0,T), \\[1mm]
           u_\varrho(x,0) & = & 0 & & \mbox{for} \; x \in \overline{\Sigma}_0 :=
                                      \overline{\Omega} \times \{ 0 \},
         \end{array} \right \}
\end{equation}
where  $\overline{u} \in L^2(Q)$ is the given desired state (target),
$\partial_t$ denotes the partial time derivative, 
$\Delta_x$ is the spatial Laplace operator,
$\Omega \subset {\mathbb{R}}^n$, $n=1,2,3$, is the spatial domain that
is assumed to be bounded and Lipschitz,
$T>0$ is a given time horizon, and $\varrho > 0$ is a suitably
chosen regularization parameter.
The standard setting of such kind of optimal control problems uses the 
regularization in $L^2(Q)$ instead of $L^2(0,T;H^{-1}(\Omega))$;
see, e.g., the books \cite{LSY:BorziSchulz:2011a,
  LSY:HinzePinnauUlbrichUlbrich:2009a,LSY:Troeltzsch:2010a},
and the references given therein. 
The energy regularization, as the regularization in
$L^2(0,T;H^{-1}(\Omega))$ is also called,
permits controls $z_\varrho$ from the space
$L^2(0,T;H^{-1}(\Omega))$ that is larger than $L^2(Q)$, and admits more
concentrated controls. Such kind of controls that are concentrated around
hypersurfaces play an important role in electromagnetics in form of
thinly wound coils and magnets. Moreover, the space
$L^2(0,T;H^{-1}(\Omega))$ is the natural space for the source term in the
variational formulation of the initial boundary value problem
\eqref{Parabolic PDE}, at least, in the Hilbert space setting; see, e.g.,  
\cite{LSY:Lions:1968a} or \cite{LSY:Zeidler:1990a} for solvability results. 
In the literature, there are other regularization techniques aiming at specific 
properties of the control such as sparsity  and  directional sparsity.
We refer the reader to the recent survey article \cite{LSY:Casas:2017a} where a 
comprehensive overview of the literature on this topic is given.

Since the state equation \eqref{Parabolic PDE} in its variational form
has a unique solution 
$u_\varrho \in X:= \{u \in Y:=L^2(0,T;H^{1}_0(\Omega)):
\partial_t u \in Y^*, \; u = 0 \;\text{on} \; \Sigma_0\}$,
for every given right-hand side $z_\varrho \in Y^*:= L^2(0,T;H^{-1}(\Omega))$,
the corresponding optimal control problem
\eqref{Parabolic minimization problem}-\eqref{Parabolic PDE}
also has a unique solution $(u_\varrho, z_\varrho) \in X \times Y^*$ 
that can be computed by solving the first-order optimality system or
the reduced first-order optimality system
where the control is eliminated by the gradient equation.
The unique solvability of the state equation can also be shown by the 
Banach--Ne\u{c}as--Babu\u{s}ka theorem
as it was done in \cite{LSY:Steinbach:2015a}.
This theorem can also be used to show
well-posedness of the reduced first-order optimality system 
as it was done in \cite{LSY:LangerSteinbachTroeltzschYang:2021c}.
Now the optimal control problem \eqref{Parabolic minimization problem}-\eqref{Parabolic PDE} 
can be approximately solved by discretizing the reduced optimality system.
Following \cite{LSY:LangerSteinbachTroeltzschYang:2021c}, we discretize
the reduced optimality system by means of a real space-time finite
element method working on fully unstructured, but shape regular 
simplicial space-time meshes into which the space-time cylinder $Q$
is decomposed.
In \cite{LSY:LangerSteinbachTroeltzschYang:2021c}, the authors showed a
discrete inf-sup condition for the bilinear form arising from the
variational formulation of the reduced optimality system. 
Once a discrete inf-sup condition is proven, one can easily derive the
corresponding estimates for the finite element discretization error
$u_\varrho - \widetilde{u}_{\varrho h}$ and $p_\varrho - \widetilde{p}_{\varrho h}$
in the corresponding norms, where $\widetilde{u}_{\varrho h}$ and
$\widetilde{p}_{\varrho h}$ are the finite element solutions 
to the reduced first-order optimality system approximating the state
$u_\varrho$ and the co-state (adjoint) $p_\varrho$, respectively.

In this paper, we are investigating the error between the computed finite
element solution $\widetilde{u}_{\varrho h}$ and the desired state
$\overline{u}$, where
we use continuous, piecewise linear finite element basis functions.
This error is obviously of primary interest since one 
wants to know how well $\widetilde{u}_{\varrho h}$ approximates
$\overline{u}$ in advance.
More precisely, we derive estimates for the $L^2(Q)$ norm of this error
in terms of $\varrho$ and $h$, and depending on the smoothness of
the target $\overline{u}$ that is assumed to belong to $H^s(Q)$ for
some $s \in (0,2]$.
In particular, we admit discontinuous targets that are important in many
practical applications.
These estimates lead to the optimal choice $\varrho = h^2$ in all cases. 
For elliptic optimal control problems with energy regularization, i.e.,
in $H^{-1}(\Omega)$,
error estimates for $\|\overline{u} - {u}_{\varrho}\|_{L^2(Q)}$ and 
$\| \overline{u} - \widetilde{u}_{\varrho h}\|_{L^2(Q)}$
were recently derived in
\cite{LSY:NeumuellerSteinbach:2021a}
and \cite{LSY:LangerSteinbachYang:2022a}, respectively.
It is interesting that, in the elliptic case,  ${u}_{\varrho}$ solves 
the singularly perturbed reaction-diffusion equation
$- \varrho \Delta u_\varrho + u_\varrho = \overline{u} $ 
in $\Omega$ with homogeneous Dirichlet conditions on the boundary
$\partial \Omega$, also known as differential filter in fluid
mechanics \cite{LSY:John:2016a},
whereas, in the parabolic case, ${u}_{\varrho}$ solves a similar
singularly perturbed problem,
but with a more complicated space-time operator of the form
$B^*A^{-1}B$ replacing $-\Delta$,
where $B: X \to Y^*$ is nothing but the state (parabolic) operator,
and $A: Y \to Y^*$ represents the spatial Laplacian $-\Delta_x$;
see Sections~\ref{sec:AbstractOCP} and \ref{sec:ParabolicOCP} for a more
detailed discussion.

The reminder of this paper is organized as follows:
Section~\ref{sec:AbstractOCP} deals with the formulation of an abstract
optimal control problem,
and the corresponding error estimates between the desired state and
the discrete state based on the exact state Schur complement equation. 
In Section~\ref{sec:ParabolicOCP}, we consider a model parabolic
distributed optimal control problem with energy regularization,
and derive estimates for the $L^2(Q)$
error between the desired state $\overline{u}$ and the finally
computed state $ \widetilde{u}_{\varrho h} $ from the
perturbed state Schur complement equation for the coupled optimality system. 
Several numerical tests in two and three space dimensions are discussed in
Section~\ref{sec:NumericalResults}. 
Finally, some conclusions are drawn in Section~\ref{sec:ConclusionsOutlook},
and we also discuss some future research topics.  


\section{Abstract optimal control problems}
\label{sec:AbstractOCP}
Let $X \subset H \subset X^*$ and $Y \subset H \subset Y^*$
be Gelfand triples of Hilbert spaces, where $X^*, Y^*$ are
the duals of $X, Y$ with respect to $H$. Let $ A : Y \to Y^*$
and $B: X \to Y^*$ be bounded linear operators, i.e.,
\begin{equation}\label{Bounded abstract}
  \| A v \|_{Y^*} \leq c_2^A \, \| v \|_Y \quad \forall v \in Y,
  \quad
  \| B u \|_{Y^*} \leq c_2^B \, \| u \|_X \quad \forall u \in X.
\end{equation}
We assume that $A$ is self-adjoint and elliptic in $Y$, and that $B$
satisfies an inf-sup condition, i.e., there exist positive constants
$c_1^A$ and $c_1^B$ such that
\begin{equation}\label{inf-sup abstract}
  \langle A v,v \rangle_H \geq c_1^A \, \| v \|_Y^2 \quad \forall v \in Y,
  \quad \sup\limits_{0 \neq v \in Y}
  \frac{\langle Bu,v\rangle_H}{\| v \|_Y} \geq c_1^B \, \| u \|_X \quad
  \forall u \in X .
\end{equation}
In addition, we assume that the dual to $B$ operator $B^*: Y \to X^*$ is injective. 
Then, due to Lax--Milgram's and Banach--Ne\v{c}as--Babu\v{s}ka's
theorems (see, e.g.,\cite{LSY:ErnGuermond:2004a}), 
$A:Y \to Y^*$ and $B:X \to Y^*$ are isomorphisms. Therefore, 
\begin{equation} \label{Y^*Norm}
  \| z \|_{Y^*} = \sqrt{\langle A^{-1} z , z \rangle_H} \quad
  \mbox{for} \; z \in Y^* 
\end{equation}
defines a norm in $Y^*$ that is equivalent to the standard supremum norm.

We now consider the abstract minimization problem 
to find the minimizer $(u_\varrho,z_\varrho) \in X \times Y^*$
of the functional
\begin{equation}\label{Abstract minimization problem}
  {\mathcal{J}}(u_\varrho,z_\varrho) =
  \frac{1}{2} \, \| u_\varrho - \overline{u} \|_H^2 +
  \frac{1}{2} \, \varrho \, \| z_\varrho \|^2_{Y^*} \quad
  \mbox{subject to} \; Bu_\varrho=z_\varrho ,
\end{equation}
when $\overline{u} \in H$ is given, and $\varrho \in {\mathbb{R}}_+$
is some regularization parameter. 
For the time being, our particular interest is focused on the behavior
of $\|u_\varrho - \overline{u}\|_H$ as $\varrho \to 0$.
The minimizer $(u_\varrho,z_\varrho)$ of (\ref{Abstract minimization problem})
is determined as the unique solution of the optimality system,
see, e.g., \cite{LSY:LangerSteinbachTroeltzschYang:2021c},
\begin{equation}\label{Abstract optimality system}
  B u_\varrho = z_\varrho, \qquad
  B^* p_\varrho = u_\varrho - \overline{u} , \qquad
  p_\varrho + \varrho A^{-1} z_\varrho = 0 .
\end{equation}
%
Eliminating
the control $z_\varrho \in Y^*$ and the adjoint
variable $p_\varrho \in Y$ 
results in 
the
operator equation to find $u_\varrho \in X$ such that
\begin{equation}\label{Abstract reduced reduced optimality system}
  \varrho B^* A^{-1} B u_\varrho + u_\varrho = \overline{u}
  \quad \mbox{in} \; X^* .
\end{equation}  
Let us introduce the operator $ S := B^* A^{-1} B : X \to X^*$,
for which we have the following result:
\begin{lemma}\label{Lemma S abstrakt}
  There hold the inequalities
  \[
    \langle S u , u \rangle_H \geq c_1^S \, \| u \|_X^2 \quad \mbox{and} \quad
    \| S u \|_{X^*} \leq c_2^S \, \| u \|_X \quad \mbox{for all} \; u \in X
  \]
  with constants
  \[
    c_1^S = c_1^A \left( \frac{c_1^B}{c_2^A} \right)^2 
    \quad \mbox{and} \quad 
    c_2^S = \frac{[c_2^B]^2}{c_1^A} \, .
  \]
\end{lemma}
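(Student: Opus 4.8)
The plan is to establish the two inequalities directly from the definitions of $S$, $A$, $B$, and the hypotheses \eqref{Bounded abstract}--\eqref{inf-sup abstract}. I would first record two auxiliary facts about $A$ and $A^{-1}$. From the ellipticity $\langle Av,v\rangle_H \ge c_1^A\|v\|_Y^2$ and boundedness $\|Av\|_{Y^*}\le c_2^A\|v\|_Y$, together with the identity \eqref{Y^*Norm}, one obtains the spectral-type estimates
\[
  \frac{1}{c_2^A}\,\|z\|_{Y^*}^2 \;\le\; \langle A^{-1}z,z\rangle_H \;\le\; \frac{1}{c_1^A}\,\|z\|_{Y^*}^2
  \qquad\text{for all } z\in Y^*,
\]
i.e.\ $A^{-1}$ is itself self-adjoint, elliptic, and bounded on $Y^*$ with the reciprocal constants. (The lower bound uses $\|z\|_{Y^*} = \|A(A^{-1}z)\|_{Y^*}\le c_2^A\|A^{-1}z\|_Y$ and then the ellipticity applied to $v=A^{-1}z$; the upper bound is the Cauchy--Schwarz/boundedness argument.)

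For the coercivity of $S$, I would write $\langle Su,u\rangle_H = \langle B^*A^{-1}Bu,\,u\rangle_H = \langle A^{-1}(Bu),\,Bu\rangle_H \ge \frac{1}{c_2^A}\,\|Bu\|_{Y^*}^2$ using the lower auxiliary bound with $z = Bu$. Then the inf-sup condition on $B$ gives $\|Bu\|_{Y^*} = \sup_{0\neq v\in Y}\frac{\langle Bu,v\rangle_H}{\|v\|_Y} \ge c_1^B\|u\|_X$, so that $\langle Su,u\rangle_H \ge \frac{(c_1^B)^2}{c_2^A}\,\|u\|_X^2$. Comparing with the stated constant $c_1^S = c_1^A\bigl(c_1^B/c_2^A\bigr)^2 = (c_1^B)^2 c_1^A/(c_2^A)^2$, I notice that the clean chain above actually yields the slightly larger lower bound $(c_1^B)^2/c_2^A$; the stated form presumably arises from bounding $\|Bu\|_{Y^*}^2$ below using a different route (e.g.\ via $\langle A^{-1}Bu,Bu\rangle \ge c_1^A\|A^{-1}Bu\|_Y^2$ and then $\|Bu\|_{Y^*}\le c_2^A\|A^{-1}Bu\|_Y$, which costs an extra factor $c_1^A/(c_2^A)^2$). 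I would follow whichever chain reproduces the paper's constant exactly and flag that the two are consistent since $c_1^A\le c_2^A$.

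For the boundedness of $S$, I would estimate in the dual norm: for any $u\in X$ and $w\in X$,
\[
  \langle Su,w\rangle_H = \langle A^{-1}Bu,\,Bw\rangle_H \le \|A^{-1}Bu\|_Y\,\|Bw\|_{Y^*},
\]
and then use $\|A^{-1}Bu\|_Y \le \frac{1}{c_1^A}\|Bu\|_{Y^*}$ (ellipticity of $A$ applied to $v=A^{-1}Bu$ gives $c_1^A\|A^{-1}Bu\|_Y^2 \le \langle Bu,A^{-1}Bu\rangle_H \le \|Bu\|_{Y^*}\|A^{-1}Bu\|_Y$) together with $\|Bu\|_{Y^*}\le c_2^B\|u\|_X$ and $\|Bw\|_{Y^*}\le c_2^B\|w\|_X$. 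Dividing by $\|w\|_X$ and taking the supremum yields $\|Su\|_{X^*}\le \frac{(c_2^B)^2}{c_1^A}\|u\|_X = c_2^S\|u\|_X$, as claimed.

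I do not anticipate a genuine obstacle here — the result is a standard Schur-complement/spectral-equivalence computation. The only point requiring care is bookkeeping of the constants so that the final expressions match $c_1^S = c_1^A(c_1^B/c_2^A)^2$ and $c_2^S = (c_2^B)^2/c_1^A$ verbatim; in particular one must be consistent about whether $\|\cdot\|_{Y^*}$ is interpreted via \eqref{Y^*Norm} or the supremum norm, since the two are only equivalent, not equal, and sharpness of constants depends on that choice. I would state explicitly which norm is used throughout and verify both inequalities reduce to the stated constants.
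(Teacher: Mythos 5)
Your proposal is correct and is, in substance, the paper's own argument: the boundedness estimate is exactly the paper's (set $\overline{p}=A^{-1}Bu$, use $c_1^A\|\overline{p}\|_Y^2\le\langle Bu,\overline{p}\rangle_H\le c_2^B\|u\|_X\|\overline{p}\|_Y$ and then $\langle Su,v\rangle_H=\langle\overline{p},Bv\rangle_H\le c_2^B\|\overline{p}\|_Y\|v\|_X$), and the coercivity chain you identify as the one reproducing $c_1^S$ --- ellipticity applied to $\overline{p}=A^{-1}Bu$ together with $c_1^B\|u\|_X\le\|A\overline{p}\|_{Y^*}\le c_2^A\|\overline{p}\|_Y$ from the inf-sup condition --- is precisely the paper's proof. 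The one inaccuracy is in your auxiliary remark: the route you sketch in parentheses ($\|z\|_{Y^*}\le c_2^A\|A^{-1}z\|_Y$ plus ellipticity at $v=A^{-1}z$) gives only $\langle A^{-1}z,z\rangle_H\ge c_1^A\,(c_2^A)^{-2}\|z\|_{Y^*}^2$, not $(c_2^A)^{-1}\|z\|_{Y^*}^2$; the sharper constant needs the Cauchy--Schwarz inequality in the $A$-inner product (using self-adjointness), $|\langle z,w\rangle_H|\le\langle A^{-1}z,z\rangle_H^{1/2}\langle Aw,w\rangle_H^{1/2}\le\langle A^{-1}z,z\rangle_H^{1/2}\sqrt{c_2^A}\,\|w\|_Y$ --- though either version suffices here, since both dominate the stated $c_1^S$.
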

\begin{proof}
  For 
  arbitrary, but fixed $u \in X$,
  we define $\overline{p} = A^{-1} B u$ to
  obtain
  \[
    \langle S u , u \rangle_H =
    \langle A^{-1} B u , Bu \rangle_H =
    \langle A \overline{p} , \overline{p} \rangle_H \geq
    c_1^A \, \| \overline{p} \|_Y^2 \, .
  \]
  From the inf-sup condition \eqref{inf-sup abstract} we further conclude
  \[
    c_1^B \, \| u \|_X \leq \sup\limits_{0 \neq v \in Y}
    \frac{\langle Bu , v \rangle_H}{\| v \|_Y} =
    \sup\limits_{0 \neq v \in Y}
    \frac{\langle A \overline{p} , v \rangle_H}{\| v \|_Y} \leq
    \| A \overline{p} \|_{Y^*} \leq c_2^A \, \| \overline{p} \|_Y .
  \]
  This gives
  \[
    \langle S u , u \rangle_H \geq c_1^A \, \| \overline{p} \|^2_Y \geq
    c_1^A \left( \frac{c_1^B}{c_2^A} \right)^2 \| u \|_X^2 =
    c_1^S \, \| u \|_X^2 .
  \]
  To prove the second estimate, we consider
  \[
    c_1^A \, \| \overline{p} \|_Y^2 \leq
    \langle A \overline{p} , \overline{p} \rangle_H =
    \langle B u , \overline{p} \rangle_H \leq
    \| B u \|_{Y^*} \| \overline{p} \|_Y \leq
    c_2^B \, \| u \|_X \| \overline{p} \|_Y,
  \]
  i.e.,
  \[
    \| \overline{p} \|_Y \leq \frac{c_2^B}{c_1^A} \, \| u \|_X \, .
  \]
  With this we finally obtain
  \begin{eqnarray*}
    \| S u \|_{X^*}
    & = & \sup\limits_{0 \neq v \in X}
          \frac{\langle S u , v \rangle_H}{\| v \|_X} =
          \sup\limits_{0 \neq v \in X}
          \frac{\langle A^{-1} B u , B v \rangle_H}{\| v \|_X} \\
    & = & \sup\limits_{0 \neq v \in X}
          \frac{\langle \overline{p} , B v \rangle_H}{\| v \|_X} \leq
          \sup\limits_{0 \neq v \in X}
          \frac{\| \overline{p} \|_Y  \| B v \|_{Y^*}}{\| v \|_X} \\
    & \leq & c_2^B \, \| \overline{p} \|_Y \leq
             \frac{[c_2^B]^2}{c_1^A} \, \| u \|_X = c_2^S \, \| u \|_X \, .
  \end{eqnarray*}
\end{proof}

\noindent
As a consequence of Lemma \ref{Lemma S abstrakt} we also have
\[
  \langle S u , u \rangle_H \leq \| S u \|_{X^*} \| u \|_X \leq
  c_2^S \, \| u \|_X^2 ,
\]
i.e.,
\[
  \| u \|_S^2 := \langle S u , u \rangle_H =
  \langle A^{-1} B u , Bu \rangle_H 
\]
defines an equivalent norm in $X$ satisfying the norm equivalence inequalities
\begin{equation}\label{norm equivalence}
  c_1^S \, \| u \|_X^2 \leq \| u \|_S^2 \leq c_2^S \, \|u \|_X^2 \quad
  \mbox{for all} \; u \in X .
\end{equation}
Now we consider the abstract operator equation to find $u_\varrho \in X$
such that
\begin{equation}\label{Abstract operator equation}
\varrho S u_\varrho + u_\varrho = \overline{u} \quad \mbox{in} \; X^*,
\end{equation}
and its equivalent variational formulation
\begin{equation}\label{Abstract variational formulation}
  \varrho \, \langle S u_\varrho,v \rangle_H + \langle u_\varrho , v \rangle_H =
  \langle \overline{u},v \rangle_H \quad \mbox{for all} \; v \in X .
\end{equation}
Since $S$ induces an equivalent norm in $X$, unique solvability of
(\ref{Abstract variational formulation}) follows.
\begin{lemma}
  For the unique solution $u_\varrho \in X$ of the variational formulation
  \eqref{Abstract variational formulation}, there hold the estimates
  \begin{equation}\label{Abstract HH SH}
    \| u_\varrho \|_H \leq \| \overline{u} \|_H \quad \mbox{and} \quad
    \varrho \, \| u_\varrho \|^2_S \leq \| \overline{u} \|^2_H .
  \end{equation}
\end{lemma}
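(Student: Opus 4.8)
The plan is to test the variational formulation \eqref{Abstract variational formulation} with the natural choice $v = u_\varrho$ itself and exploit the sign of each term. Taking $v = u_\varrho$ gives
\[
  \varrho \, \| u_\varrho \|_S^2 + \| u_\varrho \|_H^2 = \langle \overline{u}, u_\varrho \rangle_H .
\]
Both terms on the left-hand side are nonnegative (the first because $\|\cdot\|_S$ is a norm by \eqref{norm equivalence}, the second trivially), and on the right-hand side the Cauchy--Schwarz inequality in $H$ yields $\langle \overline{u}, u_\varrho \rangle_H \le \| \overline{u} \|_H \, \| u_\varrho \|_H$.

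From here I would argue in two steps. First, dropping the nonnegative term $\varrho \, \| u_\varrho \|_S^2$ leaves $\| u_\varrho \|_H^2 \le \| \overline{u} \|_H \, \| u_\varrho \|_H$, hence $\| u_\varrho \|_H \le \| \overline{u} \|_H$, which is the first estimate in \eqref{Abstract HH SH}. Second, feeding this bound back into the full identity gives
\[
  \varrho \, \| u_\varrho \|_S^2 + \| u_\varrho \|_H^2 = \langle \overline{u}, u_\varrho \rangle_H \le \| \overline{u} \|_H \, \| u_\varrho \|_H \le \| \overline{u} \|_H^2 ,
\]
and since $\| u_\varrho \|_H^2 \ge 0$ we conclude $\varrho \, \| u_\varrho \|_S^2 \le \| \overline{u} \|_H^2$, the second estimate.

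There is really no serious obstacle here: the argument is a standard energy estimate, and the only points requiring the earlier machinery are that $S$ is symmetric and positive so that $\langle S u_\varrho, u_\varrho \rangle_H = \| u_\varrho \|_S^2 \ge 0$ (Lemma~\ref{Lemma S abstrakt} and \eqref{norm equivalence}) and that $u_\varrho$ exists and is unique (already established from the norm equivalence). If one wanted a slightly sharper route one could use Young's inequality instead of dropping a term, but the clean two-step argument above suffices and gives exactly the stated constants.
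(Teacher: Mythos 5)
Your proof is correct and follows essentially the same route as the paper: test with $v = u_\varrho$, apply the Cauchy--Schwarz inequality, first drop the nonnegative $S$-term to get $\| u_\varrho \|_H \leq \| \overline{u} \|_H$, and then feed this back in to bound $\varrho \, \| u_\varrho \|_S^2$. No gaps.
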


\begin{proof}
  For the particular choice $v=u_\varrho$ within the variational
  formulation \eqref{Abstract variational formulation}, we obtain
  \[
    \varrho \, \| u_\varrho \|_S^2 + \| u_\varrho \|^2_H
    =  \varrho \, \langle S u_\varrho , u_\varrho \rangle_H +
    \langle u_\varrho , u_\varrho \rangle_H 
    =  \langle \overline{u} , u_\varrho \rangle_H
    \leq \| \overline{u} \|_H \| u_\varrho \|_H ,
  \]
  from which we conclude
  \[
  \| u_\varrho \|_H \leq \| \overline{u} \|_H
  \]
  as well as
  \[
    \varrho \, \| u_\varrho \|^2_S \leq \| \overline{u} \|_H
    \| u_\varrho \|_H \leq \| \overline{u} \|^2_H .
  \]
\end{proof}

\noindent
Analogously to \cite[Theorem 3.2]{LSY:NeumuellerSteinbach:2021a} we can state
the following estimates, which depend on the regularity of the given
target $\overline{u} \in H$.

\begin{lemma}\label{Abstract Lemma error rho}
  Let $u_\varrho \in X$ be the unique solution of the variational formulation
  \eqref{Abstract variational formulation}. For $\overline{u} \in H$ there
  holds
  \begin{equation}\label{Abstract error H H}
    \| u_\varrho - \overline{u} \|_H \leq \| \overline{u} \|_H,
  \end{equation}
  while for $\overline{u} \in X$ the following estimates hold true:
  \begin{equation}\label{Abstract error H X}
    \| u_\varrho - \overline{u} \|_H \leq \varrho^{1/2} \,
    \| \overline{u} \|_S ,
  \end{equation}
  \begin{equation}\label{Abstract error X X}
    \| u_\varrho - \overline{u} \|_S \leq \| \overline{u} \|_S .
  \end{equation}
  If in addition $S\overline{u} \in H$ is satisfied for $\overline{u} \in X$, 
  \begin{equation}\label{Abstract error H Su}
    \| u_\varrho - \overline{u} \|_H \leq \varrho \, \| S \overline{u} \|_H
  \end{equation}
  as well as
  \begin{equation}\label{Abstract error S Su}
    \| u_\varrho - \overline{u} \|_S \leq \varrho^{1/2} \, \| S \overline{u} \|_H
  \end{equation}
  follow.
\end{lemma}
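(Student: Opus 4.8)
The plan is to run everything off the \emph{error equation} obtained by subtracting $\langle\overline u,v\rangle_H$ from \eqref{Abstract variational formulation}, namely
\[
  \varrho\,\langle S u_\varrho, v\rangle_H + \langle u_\varrho - \overline u, v\rangle_H = 0 \qquad \text{for all } v \in X,
\]
and to split the argument according to whether only $\overline u\in H$ is available or $\overline u\in X$ (and possibly $S\overline u\in H$) holds. Throughout, one uses that the bilinear form $\langle S\cdot,\cdot\rangle_H$ is symmetric (as $S=B^*A^{-1}B$ with $A$, hence $A^{-1}$, self-adjoint) and positive, so that the Cauchy--Schwarz inequality $|\langle Su,v\rangle_H|\le\|u\|_S\,\|v\|_S$ is at our disposal.

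For \eqref{Abstract error H H} the difficulty is that $u_\varrho-\overline u$ need not lie in $X$ and so cannot be used as a test function. The remedy is to read off from the strong form \eqref{Abstract operator equation} that $\varrho\,Su_\varrho=\overline u-u_\varrho\in H$, hence $Su_\varrho\in H$, and to compute directly in $H$:
\[
  \|u_\varrho-\overline u\|_H^2 = \langle\varrho\,Su_\varrho,\overline u-u_\varrho\rangle_H = \varrho\,\langle Su_\varrho,\overline u\rangle_H - \varrho\,\|u_\varrho\|_S^2 \le \varrho\,\langle Su_\varrho,\overline u\rangle_H = \langle\overline u-u_\varrho,\overline u\rangle_H ,
\]
and then apply Cauchy--Schwarz in $H$ and divide by $\|u_\varrho-\overline u\|_H$.

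For the estimates under $\overline u\in X$, the difference $u_\varrho-\overline u$ is now an admissible test function. Inserting $\pm\varrho\,\langle S\overline u,v\rangle_H$ into the error equation gives
\[
  \varrho\,\langle S(u_\varrho-\overline u),v\rangle_H + \langle u_\varrho-\overline u,v\rangle_H = -\varrho\,\langle S\overline u,v\rangle_H \qquad \text{for all } v\in X ,
\]
and the choice $v=u_\varrho-\overline u$ yields the identity
\[
  \varrho\,\|u_\varrho-\overline u\|_S^2 + \|u_\varrho-\overline u\|_H^2 = -\varrho\,\langle S\overline u,u_\varrho-\overline u\rangle_H .
\]
Estimating the right-hand side by $\varrho\,\|\overline u\|_S\,\|u_\varrho-\overline u\|_S$ and dropping the nonnegative $H$-term gives \eqref{Abstract error X X}; reinserting \eqref{Abstract error X X} into the same identity to control the $H$-term gives \eqref{Abstract error H X}. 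If in addition $S\overline u\in H$, then, since $u_\varrho-\overline u\in X\subset H$, the right-hand side of the identity may instead be bounded by $\varrho\,\|S\overline u\|_H\,\|u_\varrho-\overline u\|_H$; dropping the $S$-term yields \eqref{Abstract error H Su}, and substituting that bound back into the identity yields \eqref{Abstract error S Su}.

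I expect the only genuinely delicate point to be the low-regularity case $\overline u\in H$: one must resist using $u_\varrho-\overline u$ as a test function and instead exploit the membership $Su_\varrho=\varrho^{-1}(\overline u-u_\varrho)\in H$ together with $\langle Su_\varrho,u_\varrho\rangle_H=\|u_\varrho\|_S^2\ge 0$. Once that case is settled, the remaining estimates follow from the single energy identity above by routine Cauchy--Schwarz arguments and back-substitution, exactly in the spirit of \cite[Theorem 3.2]{LSY:NeumuellerSteinbach:2021a}.
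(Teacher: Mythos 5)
Your proposal is correct and follows essentially the same route as the paper: the error identity $\varrho\,\|u_\varrho-\overline u\|_S^2+\|u_\varrho-\overline u\|_H^2=\varrho\,\langle S\overline u,\overline u-u_\varrho\rangle_H$ obtained by testing with $v=u_\varrho-\overline u\in X$, followed by Cauchy--Schwarz in the $S$- and $H$-inner products and back-substitution, is exactly the paper's argument for \eqref{Abstract error H X}--\eqref{Abstract error S Su}. For \eqref{Abstract error H H} the paper simply takes $v=u_\varrho\in X$ in \eqref{Abstract variational formulation} and rearranges, arriving at the same identity $\varrho\,\|u_\varrho\|_S^2+\|u_\varrho-\overline u\|_H^2=\langle\overline u-u_\varrho,\overline u\rangle_H$ that you derive via the strong form; your detour through $Su_\varrho=\varrho^{-1}(\overline u-u_\varrho)\in H$ is valid but not needed.
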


\begin{proof}
  From the variational formulation
  \eqref{Abstract variational formulation} and for the particular
  test function $v=u_\varrho$, we obtain
  \[
    \varrho \, \| u_\varrho \|_S^2 =
    \varrho \, \langle S u_\varrho , u_\varrho \rangle_H =
    \langle \overline{u} - u_\varrho , u_\varrho \rangle_H =
    \langle \overline{u} - u_\varrho , \overline{u} \rangle_H -
    \langle \overline{u} - u_\varrho , \overline{u} - u_\varrho \rangle_H,
  \]
  which gives
  \[
    \varrho \, \| u_\varrho \|_S^2 + \| u_\varrho - \overline{u} \|_H^2 =
    \langle \overline{u} - u_\varrho , \overline{u} \rangle_H \leq
    \| \overline{u} - u_\varrho \|_H \| \overline{u} \|_H,
  \]
  i.e., (\ref{Abstract error H H}) follows.

  When assuming $\overline{u} \in X$, we can choose
  $v = \overline{u} - u_\varrho \in X$ as test function in
  \eqref{Abstract variational formulation} to conclude
  \begin{eqnarray} \nonumber
    \| \overline{u} - u_\varrho \|_H^2
    & = & \langle \overline{u} - u_\varrho ,
          \overline{u} - u_\varrho \rangle_H \\
    & = &  \varrho \, \langle S u_\varrho , \overline{u} - u_\varrho \rangle_H
    \label{Abstract Error H X Zwischenschritt}\\
    & = & \varrho \, \langle S \overline{u} , \overline{u} - u_\varrho
          \rangle_H -
          \varrho \, \langle S(\overline{u} - u_\varrho),
          \overline{u}-u_\varrho \rangle_H, \nonumber
  \end{eqnarray}
  i.e.,
  \[
    \varrho \, \| \overline{u} - u_\varrho \|_S^2 +
    \| \overline{u} - u_\varrho \|^2_H
    = \varrho \, \langle S \overline{u} , \overline{u} - u_\varrho \rangle_H
    \leq \varrho \, \| \overline{u} \|_S \| \overline{u} - u_\varrho \|_S \, .
  \]
  In a first step this gives \eqref{Abstract error X X},
  \[
  \| u_\varrho - \overline{u} \|_S \leq \| \overline{u} \|_S .
  \]
  With this we further obtain
  \[
    \| u_\varrho - \overline{u} \|^2_H
    \leq \varrho \, \| \overline{u} \|_S \| \overline{u} - u_\varrho \|_S
    \leq \varrho \, \| \overline{u} \|_S^2,
  \]
  i.e., (\ref{Abstract error H X}) follows.

  If, for $\overline{u} \in X$, we have in addition $S\overline{u} \in H$,
  from the estimate \eqref{Abstract Error H X Zwischenschritt}, we also
  conclude
  \[
    \varrho \, \| \overline{u} - u_\varrho \|_S^2 +
    \| \overline{u} - u_\varrho \|^2_H
    = \varrho \, \langle S \overline{u} , \overline{u} - u_\varrho \rangle_H
    \leq \varrho \, \| S \overline{u} \|_H \| u_\varrho - \overline{u} \|_H,
  \]
  from which \eqref{Abstract error H Su} follows. Finally, the estimates
  \[
    \varrho \, \| \overline{u} - u_\varrho \|_S^2 
    \leq \varrho \, \| S \overline{u} \|_H \| u_\varrho - \overline{u} \|_H
    \leq \varrho^2 \, \| S \overline{u} \|_H^2
  \]
  imply \eqref{Abstract error S Su}.
\end{proof}

\noindent
Based on the estimates as given in Lemma \ref{Abstract Lemma error rho}
and in the case of the particular application we have in mind,
we can derive more general estimates which are based on interpolation
arguments in a scale of Sobolev spaces. This will be discussed later
in more detail. 

For some conforming approximation space $X_h \subset X$, we now consider
the Galerkin variational formulation of
\eqref{Abstract variational formulation}, i.e.,
find $u_{\varrho h} \in X_h$ such that
\begin{equation}\label{Abstract Galerkin}
  \varrho \, \langle S u_{\varrho h} , v_h \rangle_H +
  \langle u_{\varrho h} , v_h \rangle_H = \langle \overline{u} , v_h \rangle_H
  \quad \forall \, v_h \in X_h .
\end{equation}
Using again standard arguments, we conclude unique solvability of
(\ref{Abstract Galerkin}), and the following Cea type a priori error
estimate,
\begin{equation}\label{Abstract Cea}
    \| u_\varrho - u_{\varrho h} \|_H \leq \inf\limits_{v_h \in X_h}
    \sqrt{\varrho \, \| u_\varrho - v_h \|^2_S + \| u_\varrho - v_h \|^2_H} .
\end{equation}
As a particular application of \eqref{Abstract Cea} we obtain, when
choosing $v_h=0$, and using \eqref{Abstract HH SH},
\[
  \| u_\varrho - u_{\varrho h} \|_H^2
  \leq \varrho \, \| u_\varrho \|^2_S + \| u_\varrho \|^2_H \, \leq \,
  2 \, \| \overline{u} \|_H^2 .
\]
Now, using \eqref{Abstract error H H}, we conclude the abstract
error estimate
\begin{equation}\label{Error H H}
  \| u_{\varrho h} - \overline{u} \|_H \leq
  \| u_\varrho - \overline{u} \|_H + \| u_\varrho - u_{\varrho h} \|_H
  \leq (1+\sqrt{2}) \, \| \overline{u} \|_H .
\end{equation}
when assuming $\overline{u} \in H$ only.

  
\section{Parabolic distributed optimal control problem}
\label{sec:ParabolicOCP}
The parabolic optimal control problem
\eqref{Parabolic minimization problem}-\eqref{Parabolic PDE} as given in 
the introduction is obviously a special case of the abstract optimal
control problem \eqref{Abstract minimization problem}. Indeed, 
in view of the abstract setting, we have
$H := L^2(Q)$, $Y := L^2(0,T;H^1_0(\Omega))$, and 
\[
  X := \{ u \in W(0,T) : u=0 \; \mbox{on} \; \Sigma_0\}, 
\]
with 
$W(0,T) :=  \{ u \in Y: \partial_t u \in Y^* = L^2(0,T;H^{-1}(\Omega)) \}$.
The related norms in $Y$, $X$, and $Y^*$ are given by
\[
  \| v \|_Y := \| \nabla_x v \|_{L^2(Q)}, \;
  \| u \|_X := \sqrt{ \| u \|^2_Y +
    \| \partial_t u \|^2_{Y^*}} , \; \text{and} \;
  \| \partial_t u \|_{Y^*} =
    \| \nabla_x w_u \|_{L^2(Q)},
\]
respectively, where $w_u \in Y$ is the unique solution of the variational problem
\[
  \langle \nabla_x w_u , \nabla_x v \rangle_{L^2(Q)} =
  \langle \partial_t u , v \rangle_Q \quad \forall \, v \in Y.
\]
For later use, we will prove the following embedding:

\begin{lemma}
  For $ u \in X \cap H^1(Q)$ there holds
  \begin{equation}\label{Norm X H1}
    \| u \|_X \leq \max \{ \sqrt{c_F} , 1 \} \, \| u \|_{H^1(Q)} 
  \end{equation}
  with the constant $c_F>0$ from the spatial Friedrichs inequality
  in $H^1_0(\Omega)$,
  \begin{equation}
   \label{FriedrichsInequality}
    \int_\Omega [v(x)]^2 \, dx \, \leq \ c_F \int_\Omega |\nabla_x v(x)|^2 \, dx
    \quad \forall \, v \in H^1_0(\Omega) .
   \end{equation}
\end{lemma}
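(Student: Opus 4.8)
The plan is to bound the two components of $\|u\|_X^2 = \|u\|_Y^2 + \|\partial_t u\|_{Y^*}^2$ separately. The term $\|u\|_Y^2 = \|\nabla_x u\|_{L^2(Q)}^2$ is already one of the summands in $\|u\|_{H^1(Q)}^2$, so it is bounded by $\|u\|_{H^1(Q)}^2$ with constant $1$; this is immediate and requires no work. The real task is to estimate $\|\partial_t u\|_{Y^*}$, which by definition equals $\|\nabla_x w_u\|_{L^2(Q)}$, where $w_u \in Y$ solves $\langle \nabla_x w_u, \nabla_x v\rangle_{L^2(Q)} = \langle \partial_t u, v\rangle_Q$ for all $v \in Y$.

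First I would test the defining variational problem for $w_u$ with $v = w_u$ itself, giving $\|\nabla_x w_u\|_{L^2(Q)}^2 = \langle \partial_t u, w_u\rangle_Q$. Since $u \in H^1(Q)$, the duality pairing $\langle \partial_t u, w_u\rangle_Q$ is just the $L^2(Q)$ inner product $\int_Q \partial_t u \, w_u \, dx\, dt$, so Cauchy--Schwarz yields $\|\nabla_x w_u\|_{L^2(Q)}^2 \le \|\partial_t u\|_{L^2(Q)} \, \|w_u\|_{L^2(Q)}$. Now I would apply the spatial Friedrichs inequality \eqref{FriedrichsInequality} pointwise in $t$ to $w_u(\cdot,t) \in H^1_0(\Omega)$ and integrate over $(0,T)$ to get $\|w_u\|_{L^2(Q)}^2 \le c_F \, \|\nabla_x w_u\|_{L^2(Q)}^2$, i.e. $\|w_u\|_{L^2(Q)} \le \sqrt{c_F}\,\|\nabla_x w_u\|_{L^2(Q)}$. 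Substituting back and cancelling one factor of $\|\nabla_x w_u\|_{L^2(Q)}$ gives $\|\partial_t u\|_{Y^*} = \|\nabla_x w_u\|_{L^2(Q)} \le \sqrt{c_F}\,\|\partial_t u\|_{L^2(Q)}$.

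Finally I would combine the two bounds: $\|u\|_X^2 = \|\nabla_x u\|_{L^2(Q)}^2 + \|\partial_t u\|_{Y^*}^2 \le \|\nabla_x u\|_{L^2(Q)}^2 + c_F\,\|\partial_t u\|_{L^2(Q)}^2 \le \max\{c_F,1\}\,\big(\|\nabla_x u\|_{L^2(Q)}^2 + \|\partial_t u\|_{L^2(Q)}^2\big) \le \max\{c_F,1\}\,\|u\|_{H^1(Q)}^2$, and taking square roots produces \eqref{Norm X H1}. I do not anticipate a genuine obstacle here; the only point requiring a little care is the justification that for $u \in H^1(Q)$ the abstract duality pairing $\langle \partial_t u, v\rangle_Q$ coincides with the concrete $L^2(Q)$ pairing, so that Cauchy--Schwarz in $L^2(Q)$ is legitimate — this is standard but should be stated. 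Everything else is a short chain of Cauchy--Schwarz and Friedrichs.
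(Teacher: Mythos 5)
Your proof is correct and follows essentially the same route as the paper: bound $\|\partial_t u\|_{Y^*}\le\sqrt{c_F}\,\|\partial_t u\|_{L^2(Q)}$ using the $L^2$ Cauchy--Schwarz inequality (legitimate since $\partial_t u\in L^2(Q)$) together with the spatial Friedrichs inequality, then combine with $\|\nabla_x u\|_{L^2(Q)}\le\|u\|_{H^1(Q)}$ to get the constant $\max\{\sqrt{c_F},1\}$. The only cosmetic difference is that you estimate via the representer $w_u$ (testing with $v=w_u$ and cancelling), whereas the paper works with the supremum characterization of the $Y^*$ norm; both yield the identical bound with the same constants.
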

\begin{proof}
  Recall that we can write
  \[
    \| u \|^2_X = \| \partial_t u \|_{Y^*}^2 + \| \nabla_x u \|^2_{L^2(Q)},
  \]
  and since  $\partial_t u \in L^2(Q)$ for $u \in H^1(Q)$,
  we can bound $\| \partial_t u \|_{Y^*}$ as follows:
  \[
    \| \partial_t u \|_{Y^*} =
    \sup\limits_{0 \neq v \in Y}
    \frac{\langle \partial_t u , v \rangle_Q}{\| v \|_{Y}}
    \leq     \sup\limits_{0 \neq v \in Y}
    \frac{\| \partial_t u \|_{L^2(Q)} \| v \|_{L^2(Q)}}
    {\| \nabla_x v \|_{L^2(Q)}} \leq \sqrt{c_F} \, \| \partial_t u \|_{L^2(Q)} .
  \]
  Here we have used the Friedrichs inequality 
  \[
    \| v \|_{L^2(Q)}^2 = \int_0^T \| v(t) \|^2_{L^2(\Omega)} \, dt \leq
    c_F \int_0^T \| \nabla_x v(t) \|^2_{L^2(\Omega)} \, dt =
    c_F \, \| \nabla_x v \|^2_{L^2(Q)}
  \]
  that holds for all $v \in Y = L^2(0,T;H_0^1(\Omega))$ due
  to \eqref{FriedrichsInequality}. Hence, the estimates
  \[
    \| u \|_X^2 \leq c_F \, \| \partial_t u \|^2_{L^2(Q)} +
    \| \nabla_x u \|^2_{L^2(Q)} \leq \max \{ c_F , 1 \} \, \| u \|^2_{H^1(Q)} 
  \]
  follow. 
\end{proof}

\noindent
The variational formulation of the state equation \eqref{Parabolic PDE}
can now be written in the form: Find $u_\varrho \in X$ such that
\begin{equation*}
\int_0^T \int_\Omega \Big[ \partial_t u_\varrho(x,t) \, v(x,t) +
\nabla_x u_\varrho(x,t) \cdot \nabla_x v(x,t) \Big] \, dx \, dt 
=
\int_0^T \int_\Omega z_\varrho(x,t)  \, v(x,t)  \, dx \, dt 
\end{equation*}
for all $v \in Y$, where the first term in the bilinear form 
and the right-hand side must be understood as duality pairing
between $Y^*$ and $Y$.
This variational formulation can be rewritten as 
operator equation $B u_\varrho = z_\varrho$ in $Y^* = L^2(0,T;H^{-1}(\Omega))$.
The operator $B : X \to Y^*$ is therefore defined by the variational identity
\begin{equation}
\label{Definition_B}
  \langle B u , v \rangle_Q = \int_0^T \int_\Omega
  \Big[ \partial_t u(x,t) \, v(x,t) + \nabla_x u(x,t) \cdot
  \nabla_x v(x,t) \Big] \, dx \, dt
\end{equation}  
for all $u \in X$ and  $v \in Y$, while $A : Y \to Y^*$ is given as
\begin{equation}
\label{Definition_A}
  \langle A w , v \rangle_Q = \int_0^T \int_\Omega \nabla_x w(x,t)
  \cdot \nabla_x v(x,t) \, dx \, dt, \quad \forall  \, w,v \in Y .
\end{equation}   
We obviously have $c_1^A=c_2^A=1$.
Following \cite{LSY:Steinbach:2015a,LSY:SteinbachZank:2020a},
the operator $B : X \to Y^*$ is bounded,
\[
  \langle B u ,v \rangle_Q \leq \sqrt{2} \, \| u \|_X \| v \|_Y
  \quad \forall \, u \in X, \; v \in Y,
\]
and satisfies the inf-sup condition
\[
  \frac{1}{\sqrt{2}} \, \| u \|_X \leq
  \sup\limits_{0 \neq v \in Y} \frac{\langle Bu,v \rangle_Q}{\| v \|_Y}
  \quad \forall \, u \in X,
\]
i.e., $c_1^B=1/\sqrt{2}$ and $c_2^B=\sqrt{2}$. Hence we obtain the
statements of Lemma \ref{Lemma S abstrakt} with $c_1^S=1/2$ and $c_2^S=2$.
With these definitions, the reduced first-order optimality system can be 
written in the following operator form:
Find $(u_\varrho, p_\varrho) \in X \times Y$ such that
\begin{equation}
\label{rOS} 
\begin{pmatrix} \varrho^{-1} A & B\\ 
                B^* & -I 
\end{pmatrix}
\begin{pmatrix} p_\varrho  \\  u_\varrho
\end{pmatrix} = \begin{pmatrix} 0\\ -\overline{u} \end{pmatrix}
\quad \text{in}  \quad Y^* \times X^*,
\end{equation}
from which the control $z_\varrho = - \varrho^{-1} A p_\varrho$ can be computed;
cf. also \eqref{Abstract optimality system} and
\eqref{Abstract reduced reduced optimality system}.

For the Galerkin formulation \eqref{Abstract Galerkin}, we introduce
a conforming finite element space $X_h = S_h^1(Q) \cap X \subset X$
of piecewise linear and continuous basis functions which are defined
with respect to some admissible decomposition of the space-time
domain $Q$ into shape regular simplicial finite elements of mesh
width $h$; see, e.g., \cite{LSY:BrennerScott:2008a}.
Then the finite element approximation of 
\eqref{Abstract variational formulation} reads to
find $u_{\varrho h} \in X_h$ such that
\begin{equation}\label{VF parabolic S FEM}
  \varrho \, \langle B^* A^{-1} B u_{\varrho h} , v_h \rangle_Q +
  \langle u_{\varrho h} , v_h \rangle_{L^2(Q)} =
  \langle \overline{u} , v_h \rangle_{L^2(Q)}
\end{equation}
is satisfied for all $v_h \in X_h$.

\begin{theorem}
\label{Theorem1}
Assume $\overline{u} \in [L^2(Q),X]_s \cap H^s(Q)$ for $s \in [0,1)$
or $\overline{u} \in X \cap H^s(Q)$ for $s \in [1,2]$.
For the unique solution $u_{\varrho h} \in X_h$ of \eqref{VF parabolic S FEM},
the finite element error estimate
\begin{equation}
\label{Th1Estimate}
\| u_{\varrho h} - \overline{u} \|_{L^2(Q)} \, \leq \, c \, h^s \,
\| \overline{u} \|_{H^s(Q)} 
\end{equation}
holds provided that $\varrho = h^2$.
\end{theorem}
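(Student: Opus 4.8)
The plan is to split the total error via the triangle inequality,
\[
  \| u_{\varrho h} - \overline{u} \|_{L^2(Q)} \leq
  \| u_{\varrho h} - u_\varrho \|_{L^2(Q)} + \| u_\varrho - \overline{u} \|_{L^2(Q)},
\]
and to bound each piece by $c\,h^s\|\overline{u}\|_{H^s(Q)}$ under the coupling $\varrho = h^2$. For the \emph{regularization error} $\|u_\varrho - \overline{u}\|_{L^2(Q)}$ I would interpolate between the endpoint estimates of Lemma~\ref{Abstract Lemma error rho}: estimate \eqref{Abstract error H H} gives $\|u_\varrho-\overline{u}\|_H \leq \|\overline{u}\|_H$ for $\overline{u}\in H=L^2(Q)$, and \eqref{Abstract error H Su} gives $\|u_\varrho-\overline{u}\|_H \leq \varrho\,\|S\overline{u}\|_H$ when $\overline{u}\in X$ and $S\overline{u}\in H$. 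One checks $S\overline{u}\in H$ with $\|S\overline{u}\|_H \lesssim \|\overline{u}\|_{H^2(Q)}$ for $\overline{u}\in X\cap H^2(Q)$ (using $c_1^A=c_2^A=1$ and boundedness of $B,B^*$, so that $S=B^*A^{-1}B$ maps $H^2(Q)\cap X$ into $L^2(Q)$), and then a space interpolation argument in the scale $[L^2(Q),H^2(Q)]_{s/2}=H^s(Q)$ yields $\|u_\varrho-\overline{u}\|_{L^2(Q)} \leq c\,\varrho^{s/2}\,\|\overline{u}\|_{H^s(Q)}$ for $s\in[0,2]$; with $\varrho=h^2$ this is $c\,h^s\,\|\overline{u}\|_{H^s(Q)}$. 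The split hypotheses on $\overline{u}$ in the theorem statement ($[L^2(Q),X]_s\cap H^s(Q)$ for $s<1$, $X\cap H^s(Q)$ for $s\ge1$) are exactly what makes the interpolation legitimate at each regularity level.

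For the \emph{discretization error} $\|u_{\varrho h}-u_\varrho\|_{L^2(Q)}$ I would start from the C\'ea-type estimate \eqref{Abstract Cea},
\[
  \| u_\varrho - u_{\varrho h} \|_H \leq \inf_{v_h\in X_h}
  \sqrt{\varrho\,\|u_\varrho - v_h\|_S^2 + \|u_\varrho - v_h\|_H^2},
\]
choose $v_h = Q_h u_\varrho$ to be a quasi-interpolant (or $L^2$/Scott--Zhang projection) of $u_\varrho$ into $X_h=S_h^1(Q)\cap X$, and use the norm equivalence \eqref{norm equivalence} to replace $\|\cdot\|_S$ by $\|\cdot\|_X$ up to the constant $\sqrt{c_2^S}=\sqrt2$. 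Then $\|u_\varrho - v_h\|_H \lesssim h\,|u_\varrho|_{H^1(Q)}$ and, via \eqref{Norm X H1}, $\|u_\varrho-v_h\|_X \lesssim \|u_\varrho - v_h\|_{H^1(Q)} \lesssim |u_\varrho|_{H^1(Q)}$ (plus the crude bound $\|u_\varrho-v_h\|_X \lesssim \|u_\varrho\|_X$ with no powers of $h$), so that
\[
  \|u_\varrho - u_{\varrho h}\|_{L^2(Q)} \leq c\,\bigl(\varrho^{1/2} + h\bigr)\,\|u_\varrho\|_{H^1(Q)} \leq c\,h\,\|u_\varrho\|_{H^1(Q)}
\]
when $\varrho=h^2$. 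It remains to bound $\|u_\varrho\|_{H^1(Q)}$ by $\varrho^{-(1-s/2)}\|\overline{u}\|_{H^s(Q)}$ — i.e. $u_\varrho$ is $H^1$-regular but with a negative power of $\varrho$ in front unless $\overline{u}$ is already smooth — which is obtained by interpolating the stability bound $\|u_\varrho\|_S \leq \varrho^{-1/2}\|\overline{u}\|_H$ from \eqref{Abstract HH SH} (giving $\|u_\varrho\|_X \lesssim \varrho^{-1/2}\|\overline{u}\|_{L^2(Q)}$) against $\|u_\varrho\|_X \lesssim \|\overline{u}\|_X$ for $\overline{u}\in X$; with $\varrho=h^2$ the product $h\cdot\varrho^{-(1-s/2)} = h\cdot h^{-(2-s)} = h^{s-1}$... so more care is needed. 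The correct route is to use a \emph{duality/Aubin--Nitsche} argument to gain the extra half power, or to keep the sharper C\'ea bound with the $\|\cdot\|_S$-best approximation of $u_\varrho-\overline{u}$ rather than $u_\varrho$ alone; concretely, write $u_\varrho - v_h = (u_\varrho-\overline{u}) + (\overline{u}-v_h)$ with $v_h=Q_h\overline{u}$, so that $\|u_\varrho-v_h\|_H \leq \|u_\varrho-\overline{u}\|_H + c\,h^s\|\overline{u}\|_{H^s(Q)}$ (first term already controlled above) and $\varrho^{1/2}\|u_\varrho - v_h\|_S \leq \varrho^{1/2}\|u_\varrho-\overline{u}\|_S + \varrho^{1/2}\|\overline{u}-v_h\|_S$, where $\varrho^{1/2}\|u_\varrho-\overline{u}\|_S \leq c\,\varrho^{1/2}\cdot\varrho^{(s-1)/2}\|\overline{u}\|_{H^s(Q)} = c\,\varrho^{s/2}\|\overline{u}\|_{H^s(Q)}$ by interpolating \eqref{Abstract error S Su}/\eqref{Abstract error X X}, and $\varrho^{1/2}\|\overline{u}-v_h\|_S \lesssim \varrho^{1/2}h^{s-1}\|\overline{u}\|_{H^s(Q)} = h\cdot h^{s-1}\|\overline{u}\|_{H^s(Q)} = h^s\|\overline{u}\|_{H^s(Q)}$ using an inverse inequality $\|\overline{u}-v_h\|_S \lesssim \|\overline{u}-v_h\|_X \lesssim h^{-1}\|\overline{u}-v_h\|_{L^2(Q)} + |\overline{u}-v_h|_{H^1(Q)} \lesssim h^{s-1}\|\overline{u}\|_{H^s(Q)}$.

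Collecting the two contributions gives $\|u_{\varrho h}-\overline{u}\|_{L^2(Q)} \leq c\,h^s\,\|\overline{u}\|_{H^s(Q)}$ for the full range $s\in[0,2]$, which is \eqref{Th1Estimate}. The main obstacle is the bookkeeping of $\varrho$-powers in the discretization error: a naive C\'ea estimate applied to $u_\varrho$ directly loses half a power of $h$ because $u_\varrho$ has an $H^1$-norm that blows up like $\varrho^{-(1-s/2)}$ for rough targets, so one must instead measure best approximation of $u_\varrho-\overline{u}$ (exploiting that this difference is small in both the $H$- and $S$-norms by Lemma~\ref{Abstract Lemma error rho}) and invoke an inverse inequality on the finite-element part $\overline{u}-v_h$; getting the two interpolation scales (the abstract one $[H,X]$ and the concrete Sobolev one $H^s(Q)$) to line up, and verifying the inverse inequality in the $\|\cdot\|_S$-norm via \eqref{norm equivalence} and \eqref{Norm X H1}, is where the real work lies. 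A secondary technical point is justifying $S\overline{u}\in H$ with the bound $\|S\overline{u}\|_{L^2(Q)} \lesssim \|\overline{u}\|_{H^2(Q)}$, which requires the spatial elliptic regularity implicit in $A^{-1}:H^{-1}\to H^1_0$ combined with the mapping properties of $B$ and $B^*$ on the space-time cylinder.
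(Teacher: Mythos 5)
Your proposal is correct and follows essentially the same route as the paper's proof: split the error into regularization and discretization parts, control the first via Lemma~\ref{Abstract Lemma error rho}, control the second via the C\'ea estimate \eqref{Abstract Cea} after inserting $\overline{u}$ and a Scott--Zhang quasi-interpolant of $\overline{u}$ (exactly the device you arrive at after discarding the naive best approximation of $u_\varrho$), use $\|S\overline{u}\|_{L^2(Q)} \leq c\,\|\overline{u}\|_{H^2(Q)}$, set $\varrho=h^2$, and interpolate. The only organizational difference is that the paper proves the endpoint cases $s=0,1,2$ and interpolates the final estimate (which is also how it covers $s\in[0,1)$, where $\|\overline{u}\|_S$ is not defined), whereas you interpolate the individual ingredients; your inverse-inequality step for $\|\overline{u}-v_h\|_S$ is superfluous, since \eqref{Norm X H1} together with the (fractional) quasi-interpolation error estimate already yields the $h^{s-1}$ bound.
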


\begin{proof}
  For $\overline{u} \in L^2(Q)$, we can write the error estimate
  \eqref{Error H H} as
  \[
    \| u_{\varrho h} - \overline{u} \|_{L^2(Q)} \leq (1+\sqrt{2}) \,
    \| \overline{u} \|_{L^2(Q)} .
  \]
  Due to $X \subset H^1(Q)$, we now assume
  $\overline{u} \in X \cap H^1(Q)$ for which we can write the error estimate
  \eqref{Abstract Cea} as
  \begin{eqnarray*}
    \| u_\varrho - u_{\varrho h} \|_{L^2(Q)}^2
    & \leq & \inf\limits_{v_h \in X_h} \Big[
             \varrho \, \| u_\varrho - v_h \|_S^2 +
             \| u_\varrho - v_h \|_{L^2(Q)}^2 \Big] \\
    & & \hspace*{-3cm} \leq \, 2 \left[
             \varrho \, \| u_\varrho - \overline{u} \|_S^2 +
             \| u_\varrho - \overline{u} \|^2_{L^2(Q)} +
             \inf\limits_{v_h \in X_h} \Big[
             \varrho \, \| \overline{u} - v_h \|^2_S +
             \| \overline{u} - v_h \|^2_{L^2(Q)} \Big]
        \right] \\
    & & \hspace*{-3cm} \leq \, 4 \, \varrho \, \| \overline{u} \|_S^2 + 2
             \inf\limits_{v_h \in X_h} \Big[
             \varrho \, \| \overline{u} - v_h \|^2_S +
             \| \overline{u} - v_h \|^2_{L^2(Q)} \Big] \\
    & & \hspace*{-3cm} \leq \, 8 \, \varrho \, \| \overline{u} \|_X^2 + 2
             \inf\limits_{v_h \in X_h} \Big[
             2 \, \varrho \, \| \overline{u} - v_h \|^2_X +
             \| \overline{u} - v_h \|^2_{L^2(Q)} \Big] \\
    & & \hspace*{-3cm} \leq \, 8 \, \max \{ c_F, 1 \} \,
        \varrho \, \| \overline{u} \|_{H^1(Q)}^2 \\
    && \hspace*{-1cm} + 2
             \inf\limits_{v_h \in X_h} \Big[
        2 \, \max \{ c_F , 1 \} \,
        \varrho \, \| \overline{u} - v_h \|^2_{H^1(Q)} +
             \| \overline{u} - v_h \|^2_{L^2(Q)} \Big]
  \end{eqnarray*}
  when using \eqref{Abstract error X X} and \eqref{Abstract error H X},
  the upper norm equivalence inequality in \eqref{norm equivalence}
  with $c_2^S=2$, and $\| \overline{u} \|_{H^1(Q)}$ as upper bound
  of $\| \overline{u} \|_X$, see \eqref{Norm X H1}.
  Now inserting a suitable $H^1$-stable quasi-interpolation
  $v_h = P_h \overline{u} \in X_h$ 
  of the desired state $\overline{u} \in H^1(Q)$, e.g.,
  Scott--Zhang's interpolation \cite{LSY:BrennerScott:2008a},
  we immediately obtain the estimate
  \[
    \| u_\varrho - u_{\varrho h} \|_{L^2(Q)}^2 \le
    c \, [\varrho + h^2] \, \| \overline{u} \|^2_{H^1(Q)}.
  \]
  Combining this estimate with \eqref{Abstract error H X} and
  chosing $\varrho = h^2$ finally gives
  \[
    \| u_{\varrho h} - \overline{u} \|_{L^2(Q)} \leq c \, h \,
    \| \overline{u} \|_{H^1(Q)} .
  \]
  Next we consider $\overline{u} \in X \cap H^2(Q)$ which guarantees
  $S \overline{u} \in L^2(Q)$.
  Similar as above, but now using \eqref{Abstract error H Su} and
  \eqref{Abstract error S Su}, we then obtain the estimates
  \begin{eqnarray*}
    \| u_{\varrho h} - \overline{u}\|_{L^2(Q)}^2
    & \leq & 2 \, \|  u_{\varrho h} - u_\varrho  \|_{L^2(Q)}^2 +
             2 \, \| u_\varrho - \overline{u} \|_{L^2(Q)}^2 \\
    & \leq & 10 \, \varrho^2 \, \| S \overline{u} \|^2_{L^2(Q)}
             + 4 \inf\limits_{v_h \in X_h}
             \Big[ \varrho \, \| \overline{u} - v_h \|^2_S +
             \| \overline{u} - v_h \|^2_{L^2(Q)} \Big] \\
    & \leq & c \,[\varrho^2 + \varrho h^2 + h^4] \,
             \| \overline{u} \|_{H^2(Q)} \, .
  \end{eqnarray*}
  Here we have used the estimate
  \[
    \| S \overline{u} \|_{L^2(Q)} \leq c \, \| \overline{u} \|_{H^2(Q)}
  \]
  that can be shown by Fourier analysis; cf.
  \cite{LSY:SteinbachZank:2020a}. Chosing $\varrho = h^2$ yields
  \[
    \| u_{\varrho h} - \overline{u} \|_{L^2(Q)} \leq c \, h^2 \,
    \| \overline{u}\|_{H^2(Q)} \, .
  \]
  The general estimate for $s \in [0,1)$ and $s \in [1,2]$ 
  now follows from a 
  space interpolation argument; see, e.g., \cite{LSY:Tartar:2007a}.
\end{proof}

\begin{cor}
  Let us assume that   $\overline{u} \in X \cap H^s(Q)$
  for some $s \in [1,2]$. Then
  there holds the error estimate
  \begin{equation}\label{Error H1}
    \| u_{\varrho h} - \overline{u} \|_X \leq c \,
    h^{s-1} \|\overline{u}\|_{H^s(Q)} .
  \end{equation}
\end{cor}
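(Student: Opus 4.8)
The plan is to split the error by the triangle inequality,
$\|u_{\varrho h}-\overline u\|_X\le\|u_{\varrho h}-u_\varrho\|_X+\|u_\varrho-\overline u\|_X$,
and to estimate both terms after passing to the equivalent norm $\|\cdot\|_S$ via \eqref{norm equivalence}. Since the bilinear form in \eqref{Abstract Galerkin} is symmetric and coercive with respect to the energy norm $\|v\|_E:=\sqrt{\varrho\,\|v\|_S^2+\|v\|_H^2}$, the Galerkin solution is the best approximation in $\|\cdot\|_E$, i.e. $\varrho\,\|u_\varrho-u_{\varrho h}\|_S^2+\|u_\varrho-u_{\varrho h}\|_H^2\le\inf_{v_h\in X_h}\bigl(\varrho\,\|u_\varrho-v_h\|_S^2+\|u_\varrho-v_h\|_H^2\bigr)$; this is the estimate already underlying \eqref{Abstract Cea}. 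Dropping the $H$-term on the left, dividing by $\varrho$, and splitting $u_\varrho-v_h$ through $\overline u$ exactly as in the proof of Theorem~\ref{Theorem1} gives $\|u_\varrho-u_{\varrho h}\|_S^2\le c\,\varrho^{-1}\bigl[\varrho\,\|u_\varrho-\overline u\|_S^2+\|u_\varrho-\overline u\|_H^2+\inf_{v_h\in X_h}(\varrho\,\|\overline u-v_h\|_S^2+\|\overline u-v_h\|_H^2)\bigr]$.

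I would then treat the two endpoints. For $s=1$ (so $\overline u\in X\cap H^1(Q)$) I insert the Scott--Zhang quasi-interpolant $v_h=P_h\overline u$, use $\|\overline u-P_h\overline u\|_{H^1(Q)}\le c\,h\,\|\overline u\|_{H^1(Q)}$ and $\|\overline u-P_h\overline u\|_{L^2(Q)}\le c\,h^2\|\overline u\|_{H^1(Q)}$ together with \eqref{Norm X H1}, the upper inequality in \eqref{norm equivalence}, and the a priori bounds \eqref{Abstract error X X}, \eqref{Abstract error H X}; with $\varrho=h^2$ the factor $\varrho^{-1}$ is absorbed and $\|u_\varrho-u_{\varrho h}\|_S\le c\,\|\overline u\|_{H^1(Q)}$, while \eqref{Abstract error X X} gives $\|u_\varrho-\overline u\|_S\le\|\overline u\|_S\le c\,\|\overline u\|_{H^1(Q)}$, hence $\|u_{\varrho h}-\overline u\|_X\le c\,\|\overline u\|_{H^1(Q)}$, which is $c\,h^{s-1}\|\overline u\|_{H^s(Q)}$ for $s=1$. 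For $s=2$ (so $\overline u\in X\cap H^2(Q)$, hence $S\overline u\in L^2(Q)$ with $\|S\overline u\|_{L^2(Q)}\le c\,\|\overline u\|_{H^2(Q)}$ by the same Fourier argument cited in Theorem~\ref{Theorem1}) I use instead the sharper a priori bounds \eqref{Abstract error H Su}, \eqref{Abstract error S Su} and the second-order interpolation estimates, obtaining $\|u_\varrho-u_{\varrho h}\|_S^2\le c\,\varrho^{-1}[\varrho^2+\varrho h^2+h^4]\|\overline u\|_{H^2(Q)}^2$; with $\varrho=h^2$ this equals $c\,h^2\|\overline u\|_{H^2(Q)}^2$, and combined with \eqref{Abstract error S Su} it yields $\|u_{\varrho h}-\overline u\|_X\le c\,h\,\|\overline u\|_{H^2(Q)}$.

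The general case $s\in[1,2]$ then follows by a space interpolation argument between the endpoints $X\cap H^1(Q)$ and $X\cap H^2(Q)$, transporting the error bound through the interpolation functor; interpolating the rate $h^0$ at $s=1$ with $h^1$ at $s=2$ produces $h^{s-1}$, as stated.

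The routine part is the interpolation-error bookkeeping. The one genuinely delicate point is that going from the energy-norm Galerkin estimate to an $X$-norm estimate costs a factor $\varrho^{-1}$, so the argument only closes because the coupling $\varrho=h^2$ renders every term on the right-hand side of order $h^{2(s-1)}$ after division by $\varrho=h^2$; this balance must be checked separately at $s=1$ and $s=2$, where different a priori estimates from Lemma~\ref{Abstract Lemma error rho} are invoked, before interpolating. A secondary point of care is that the concluding interpolation step needs the endpoint bounds stated for the same family of discrete solutions and with norms that interpolate correctly, which is why I would phrase both endpoint estimates directly in terms of $\|\cdot\|_X$ and $\|\cdot\|_{H^s(Q)}$.
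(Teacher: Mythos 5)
Your argument is correct (under the same implicit assumption $\varrho=h^2$ that the paper also uses), but it follows a genuinely different route. The paper's proof is a short inverse-inequality argument: it bounds $\|u_{\varrho h}-\overline u\|_X$ by $\|u_{\varrho h}-\overline u\|_{H^1(Q)}$ via \eqref{Norm X H1}, inserts the Scott--Zhang interpolant $P_h\overline u$, applies the inverse estimate $\|u_{\varrho h}-P_h\overline u\|_{H^1(Q)}\le c\,h^{-1}\|u_{\varrho h}-P_h\overline u\|_{L^2(Q)}$ to the discrete function, and then invokes the already proven $L^2(Q)$ bound \eqref{Th1Estimate} of Theorem~\ref{Theorem1} together with the interpolation error estimates for $P_h$; this yields $h^{-1}\cdot h^{s}+h^{s-1}$ and hence the rate $h^{s-1}$ for all $s\in[1,2]$ in one pass, with no final space-interpolation step. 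You instead re-derive an energy-norm (equivalently $\|\cdot\|_S$-, hence $X$-norm) bound directly from the best-approximation property behind \eqref{Abstract Cea}, paying the factor $\varrho^{-1}$ and closing the balance at the endpoints $s=1$ and $s=2$ with the a priori estimates of Lemma~\ref{Abstract Lemma error rho}, then interpolating. What your route buys is independence from Theorem~\ref{Theorem1} (you essentially re-prove the relevant part of it in the $S$-norm) and a transparent view of where the coupling $\varrho=h^2$ enters; what the paper's route buys is brevity and the fact that fractional $s$ is handled automatically through \eqref{Th1Estimate} and the fractional-order Scott--Zhang estimates, avoiding the extra operator-interpolation argument at the end.

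One small inaccuracy to fix: at the endpoint $s=1$ you quote $\|\overline u-P_h\overline u\|_{H^1(Q)}\le c\,h\,\|\overline u\|_{H^1(Q)}$ and $\|\overline u-P_h\overline u\|_{L^2(Q)}\le c\,h^2\,\|\overline u\|_{H^1(Q)}$, which require $H^2$-regularity and do not hold for $\overline u\in H^1(Q)$ only. This does not break your proof: since the $s=1$ endpoint only needs an $O(1)$ bound for $\|u_\varrho-u_{\varrho h}\|_S$ after dividing by $\varrho=h^2$, the correct estimates ($H^1$-stability of $P_h$ and the first-order bound $\|\overline u-P_h\overline u\|_{L^2(Q)}\le c\,h\,\|\overline u\|_{H^1(Q)}$) already make every term in your bracket of size $c\,h^2\|\overline u\|_{H^1(Q)}^2$, so the conclusion stands; just cite the weaker, valid estimates.
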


\begin{proof}
  Let $P_h\overline{u} \in X_h$ be again Scott--Zhang's
  interpolation of $\overline{u} \in H^1(Q)$.
  Using an inverse inequality and standard
  arguments we obtain
  \begin{eqnarray*}
    \| u_{\varrho h} - \overline{u} \|_X
    & \leq & \| u_{\varrho h} - \overline{u} \|_{H^1(Q)} \\
    & \leq & \| u_{\varrho h} - P_h \overline{u} \|_{H^1(Q)} +
             \| P_h \overline{u} - \overline{u} \|_{H^1(Q)} \\
    & \leq & c \, h^{-1} \, \| u_{\varrho h} - P_h \overline{u} \|_{L^2(Q)} +
             c \, h^{s-1} \, \| \overline{u} \|_{H^s(Q)} \\
    & \leq & c \, h^{-1} \, \Big[
             \| u_{\varrho h} - \overline{u} \|_{L^2(Q)} +
             \| \overline{u} - P_h \overline{u} \|_{L^2(Q)} \Big] +
             c \, h^{s-1} \, \| \overline{u} \|_{H^s(Q)} \\
    & \leq & c \, h^{s-1} \,
    \| \overline{u} \|_{H^s(Q)} \, .
  \end{eqnarray*} 
\end{proof}

\noindent
Since \eqref{VF parabolic S FEM} requires, for any given $ w \in X$,
the evaluation of $Sw  = B^* A^{-1} B w$, we have to define
a suitable computable
approximation $\widetilde{S}w$. This can be done as follows.
For given $ w \in X$, we introduce $ p_w = A^{-1} B w \in Y$ 
as the unique solution of the variational formulation
\[
  \langle A p_w , q \rangle_Q = \langle B w , q \rangle_Q \quad
  \forall \, q \in Y .
\]
Let $p_{wh} \in Y_h  \subset Y$ be the continuous, piecewise linear
space-time finite element approximation to $p_w \in Y$, satisfying
\begin{equation}
\label{eqn:pwh}
 \langle A p_{wh} , q_h \rangle_Q = \langle B w , q_h \rangle_Q \quad
  \forall \, q_h \in Y_h .
\end{equation} 
With this we define the approximate operator $\widetilde{S}w := B^* p_{wh}$
of $Sw = B^* p_w$. The boundedness of $B : X \to Y^*$ implies
\[
  \| \widetilde{S} w \|_{X^*} = \| B^* p_{wh} \|_{X^*} \leq c_2^B \,
  \| p_{wh} \|_X,
\]
while the ellipticity of $A : Y \to Y^*$ gives
\[
  c_1^A \, \| p_{wh} \|^2_Y \leq \langle A p_{wh} , p_{wh} \rangle_Q =
  \langle B w , p_{wh} \rangle_Q \leq c_2^B \, \| w \|_X \| p_{wh} \|_Y ,
\]
i.e.,
\[
\| p_{wh} \|_Y \leq \frac{c_2^B}{c_1^A} \, \| w \|_X \, .
\]
Hence, we conclude the boundedness of the approximate operator
$\widetilde{S} : X \to X^*$,
\begin{equation}\label{bound Stilde}
  \| \widetilde{S} w \|_{X^*} \leq c_2^{\widetilde{S}} \, \| w \|_X \quad
  \forall \, w \in X, \quad c_2^{\widetilde{S}} =
  \frac{[c_2^B]^2}{c_1^A} = 2 .
\end{equation}
Instead of \eqref{VF parabolic S FEM}, we now
consider the perturbed variational formulation to find
$\widetilde{u}_{\varrho h} \in X_h$ such that
\begin{equation}\label{VF parabolic S FEM pert}
  \varrho \, \langle \widetilde{S} \widetilde{u}_{\varrho h} , v_h \rangle_Q +
  \langle \widetilde{u}_{\varrho h} , v_h \rangle_{L^2(Q)} =
  \langle \overline{u} , v_h \rangle_{L^2(Q)}
\end{equation}
is satisfied for all $v_h \in X_h$. Unique solvability of
\eqref{VF parabolic S FEM pert} follows since the stiffness matrix
of $\widetilde{S}$ is positive semi-definite, while the mass
matrix, which is related to the inner product in $L^2(Q)$,
is positive definite.

\begin{lemma}\label{Lemma Error rho h}
  Let $u_{\varrho h} \in X_h$ and $\widetilde{u}_{\varrho h} \in X_h$ be
  the unique solutions of the variational formulations
  \eqref{VF parabolic S FEM} and \eqref{VF parabolic S FEM pert},
  respectively.  Assume $\overline{u} \in X \cap H^1(Q)$. Then,
  there holds the error estimate
  \[
    \| u_{\varrho h} - \widetilde{u}_{\varrho h} \|_{L^2(Q)} \leq
    c \, h \, \| \overline{u} \|_{H^1(Q)} .
  \]
\end{lemma}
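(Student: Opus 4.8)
The plan is to estimate the difference $e_h := u_{\varrho h} - \widetilde{u}_{\varrho h} \in X_h$ by testing the two variational formulations \eqref{VF parabolic S FEM} and \eqref{VF parabolic S FEM pert} against $v_h = e_h$ and subtracting. First I would write down the error equation
\[
  \varrho \, \langle S u_{\varrho h} - \widetilde{S} \widetilde{u}_{\varrho h} , e_h \rangle_Q + \langle e_h , e_h \rangle_{L^2(Q)} = 0 ,
\]
and split the first term as $\langle S u_{\varrho h} - S \widetilde{u}_{\varrho h} , e_h \rangle_Q + \langle S \widetilde{u}_{\varrho h} - \widetilde{S} \widetilde{u}_{\varrho h} , e_h \rangle_Q$. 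The first of these equals $\varrho \, \| e_h \|_S^2 \geq 0$ by definition of the $S$-norm, so it moves to the left-hand side with a good sign. This yields
\[
  \varrho \, \| e_h \|_S^2 + \| e_h \|_{L^2(Q)}^2 = - \varrho \, \langle (S - \widetilde{S}) \widetilde{u}_{\varrho h} , e_h \rangle_Q \leq \varrho \, \| (S - \widetilde{S}) \widetilde{u}_{\varrho h} \|_{X^*} \, \| e_h \|_X ,
\]
so everything reduces to bounding the consistency error $\| (S - \widetilde{S}) w \|_{X^*}$ for $w = \widetilde{u}_{\varrho h}$.

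The key step is the consistency estimate for the inner Galerkin approximation. By definition $(S - \widetilde{S}) w = B^*(p_w - p_{wh})$ where $p_w = A^{-1} B w$ and $p_{wh} \in Y_h$ is its Galerkin approximation from \eqref{eqn:pwh}. Boundedness of $B^*$ gives $\| (S-\widetilde{S}) w \|_{X^*} \leq c_2^B \, \| p_w - p_{wh} \|_X$, but since $p_w, p_{wh} \in Y$ and the relevant $X^*$-duality only sees $Y$-type quantities one can do better: I expect $\| (S-\widetilde{S}) w \|_{X^*} \leq \sqrt{2}\, \| p_w - p_{wh} \|_Y$. Then Céa's lemma for \eqref{eqn:pwh} (with $c_1^A = c_2^A = 1$) gives $\| p_w - p_{wh} \|_Y \leq \inf_{q_h \in Y_h} \| p_w - q_h \|_Y$, and a standard quasi-interpolation / approximation estimate bounds this by $c\, h \, \| p_w \|_{H^1(0,T;H^1)}$ or similar — here one needs $p_w$ to have enough spatial regularity, which follows from elliptic regularity for $A = -\Delta_x$ applied slicewise, given that $Bw \in L^2(Q)$ when $w \in H^1(Q)$. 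Putting $w = \widetilde{u}_{\varrho h}$, I need $\| p_{\widetilde{u}_{\varrho h}} \|$ controlled; since $p_w$ depends linearly and boundedly on $w$ in the relevant norms, this reduces to an $X$- or $H^1$-bound on $\widetilde{u}_{\varrho h}$ itself.

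The remaining ingredient is a stability bound $\| \widetilde{u}_{\varrho h} \|_X \leq c \, \| \overline{u} \|_{H^1(Q)}$ (or at least something of the form $C(\varrho,h)$ that combines acceptably). Testing \eqref{VF parabolic S FEM pert} with $v_h = \widetilde{u}_{\varrho h}$ gives $\varrho \langle \widetilde{S}\widetilde{u}_{\varrho h}, \widetilde{u}_{\varrho h}\rangle_Q + \|\widetilde{u}_{\varrho h}\|_{L^2(Q)}^2 = \langle \overline{u}, \widetilde{u}_{\varrho h}\rangle_{L^2(Q)}$; note $\langle \widetilde{S} w, w\rangle_Q = \langle B^* p_{wh}, w\rangle_Q = \langle B w, p_{wh}\rangle_Q = \langle A p_{wh}, p_{wh}\rangle_Q \geq 0$, so this yields $\|\widetilde{u}_{\varrho h}\|_{L^2(Q)} \leq \|\overline{u}\|_{L^2(Q)}$ and $\varrho \, \|p_{\widetilde{u}_{\varrho h} h}\|_Y^2 \leq \|\overline{u}\|_{L^2(Q)}^2$ — the latter is exactly the quantity entering the consistency bound, giving $\varrho \, \| (S-\widetilde{S})\widetilde{u}_{\varrho h}\|_{X^*} \lesssim \sqrt{\varrho}\, h \, (\text{something}) \cdot \sqrt{\varrho}\,\|p_{\widetilde u_{\varrho h} h}\|_Y$-type terms; one then needs an inverse inequality in $X_h$ to absorb $\| e_h \|_X \leq c\, h^{-1} \| e_h \|_{L^2(Q)}$ into the left-hand side via Young's inequality, and the explicit choice $\varrho = h^2$ makes all powers balance to give $\| e_h \|_{L^2(Q)} \leq c\, h\, \|\overline{u}\|_{H^1(Q)}$.

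The main obstacle is the consistency/regularity step: one must show $p_w = A^{-1}Bw$ has the spatial $H^2$-regularity (uniformly in time, in the right Bochner norm) needed for the $O(h)$ approximation estimate in $Y$, and that the constant in $\| (S-\widetilde{S})w\|_{X^*} \le c\,\|p_w - p_{wh}\|_Y$ is as claimed rather than involving the full $X$-norm of the error. For $w \in H^1(Q)$ one has $Bw \in L^2(Q)$ with $\|Bw\|_{L^2(Q)} \le c\,\|w\|_{H^1(Q)}$, and then slicewise $H^1_0$-elliptic regularity on the (Lipschitz, hence at least convex-case-or-smooth) domain $\Omega$ gives the bound; the interaction of the time variable as a mere parameter here is routine but must be spelled out. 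The bookkeeping of the $\varrho$- and $h$-powers with the inverse inequality is the other place to be careful, but it is mechanical once $\varrho = h^2$ is substituted.
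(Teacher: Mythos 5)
Your opening manipulation is fine and is essentially a mirror image of the paper's (you put $S$ on the difference $e_h$ and the consistency defect on $\widetilde{u}_{\varrho h}$; the paper puts $\widetilde{S}$ on the difference and the defect on $u_{\varrho h}$), and the bound $\|(S-\widetilde S)w\|_{X^*}\le \sqrt{2}\,\|p_w-p_{wh}\|_Y$ together with the inverse inequality is exactly the right machinery. The genuine gap is in how you plan to bound the consistency term for $w=\widetilde{u}_{\varrho h}$. You aim at an $O(h)$ Galerkin estimate $\|p_w-p_{wh}\|_Y\le c\,h\,\|p_w\|_{H^2\text{-type}}$ via ``slicewise'' elliptic regularity, but this fails for $w=\widetilde{u}_{\varrho h}$: a piecewise linear finite element function has $B\widetilde{u}_{\varrho h}\notin L^2(Q)$ (the distributional spatial Laplacian carries jump terms on element faces), so no $H^2$-type regularity of $p_{\widetilde{u}_{\varrho h}}=A^{-1}B\widetilde{u}_{\varrho h}$ is available; moreover $\Omega$ is only assumed Lipschitz (Lipschitz does not imply convex or smooth, so even for $L^2$ right-hand sides full spatial $H^2$ regularity may fail), and the $Y$-norm finite element estimate on unstructured space-time meshes additionally needs regularity in $t$. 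The fallback stability bounds you actually derive, $\|\widetilde{u}_{\varrho h}\|_{L^2(Q)}\le\|\overline u\|_{L^2(Q)}$ and $\varrho\,\|p_{\widetilde u_{\varrho h}h}\|_Y^2\le\|\overline u\|_{L^2(Q)}^2$, are not enough: tracing the powers, they only yield $\|e_h\|_{L^2(Q)}\le c\,\varrho\,h^{-1}\,\|p_{\widetilde u_{\varrho h}}-p_{\widetilde u_{\varrho h}h}\|_Y\lesssim c\,\varrho\,h^{-2}\,\|\overline u\|_{L^2(Q)}=c\,\|\overline u\|_{L^2(Q)}$ for $\varrho=h^2$ (using an inverse inequality for $\|\widetilde u_{\varrho h}\|_X$ or $\varrho^{-1/2}$ for the discrete adjoint), i.e.\ an $O(1)$ bound without any rate; your closing paragraph acknowledges this bookkeeping is unresolved (``something''-type terms), and attempting instead to absorb $\|\widetilde u_{\varrho h}-\overline u\|_X\le c\,h^{-1}\|e_h\|_{L^2(Q)}+\dots$ into the left-hand side does not close either, since the resulting constant in front of $\|e_h\|_{L^2(Q)}$ is not smaller than one.

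The paper avoids all of this by keeping the defect on $u_{\varrho h}$, the solution of \eqref{VF parabolic S FEM}: after the inverse inequality one has $\|e_h\|_{L^2(Q)}\le c\,\varrho\,h^{-1}\,\|(S-\widetilde S)u_{\varrho h}\|_{X^*}$, and then splits $(S-\widetilde S)u_{\varrho h}=(S-\widetilde S)(u_{\varrho h}-\overline u)+(S-\widetilde S)\overline u$. The first part is handled by boundedness of $S$ and $\widetilde S$ together with the already established estimate \eqref{Error H1} for $s=1$, $\|u_{\varrho h}-\overline u\|_X\le c\,\|\overline u\|_{H^1(Q)}$ (available for $u_{\varrho h}$ but not for $\widetilde u_{\varrho h}$ at this stage); the second part needs only the trivial stability bound $\|p_{\overline u}-p_{\overline u h}\|_Y\le\|p_{\overline u}\|_Y\le\sqrt{2}\max\{\sqrt{c_F},1\}\,\|\overline u\|_{H^1(Q)}$ — no $O(h)$ consistency and no elliptic regularity at all. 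The factor $h$ then comes purely from $\varrho\,h^{-1}=h$ with $\varrho=h^2$. If you want to salvage your variant, redo the splitting with $u_{\varrho h}$ in place of $\widetilde u_{\varrho h}$ and invoke \eqref{Error H1}; as written, the key estimate of your plan cannot be carried out.
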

\begin{proof}
  The difference of the variational formulations
  \eqref{VF parabolic S FEM} and \eqref{VF parabolic S FEM pert}
  first gives the Galerkin orthogonality
  \[
    \varrho \, \langle S u_{\varrho h} -
    \widetilde{S} \widetilde{u}_{\varrho h} , v_h \rangle_Q +
    \langle u_{\varrho h} - \widetilde{u}_{\varrho h} , v_h \rangle_{L^2(Q)} = 0
    \quad \forall \, v_h \in X_h ,
  \]
  which can be written as
  \[
    \varrho \, \langle \widetilde{S}
    (\widetilde{u}_{\varrho h}-u_{\varrho h}) , v_h \rangle_Q
    +
    \langle \widetilde{u}_{\varrho h} - u_{\varrho h} , v_h \rangle_{L^2(Q)} 
    =
    \varrho \, \langle (S - \widetilde{S}) u_{\varrho h} , v_h \rangle_Q 
    \quad \forall \, v_h \in X_h .
  \]
  In particular, chosing $v_h = \widetilde{u}_{\varrho h} - u_{\varrho h} \in X_h$,
 using $\langle \widetilde{S} w , w \rangle_Q \geq 0$ for all $w \in X$,
 applying an inverse inequality in $X_h$, 
 i.e., using the dual norm for $\| \partial_t v_h \|_{Y^*}$  
 and Friedrich's inequality \eqref{FriedrichsInequality},
 we arrive at the estimates
  \begin{eqnarray*}
    \| \widetilde{u}_{\varrho h} - u_{\varrho h} \|^2_{L^2(Q)}
    & \leq & \varrho \, \langle (S - \widetilde{S}) u_{\varrho h} ,
             \widetilde{u}_{\varrho h} - u_{\varrho h}  \rangle_Q \\
    & \leq & \varrho \, \| (S-\widetilde{S}) u_{\varrho h} \|_{X^*}
             \| \widetilde{u}_{\varrho h} - u_{\varrho h} \|_X \\
    & \leq & c \, \varrho \, h^{-1} \,
             \| (S-\widetilde{S}) u_{\varrho h} \|_{X^*}
            \| \widetilde{u}_{\varrho h} - u_{\varrho h} \|_{L^2(Q)},
  \end{eqnarray*}
  i.e.,
  \[
    \| \widetilde{u}_{\varrho h} - u_{\varrho h} \|_{L^2(Q)} \leq
    c \, \varrho \, h^{-1} \, \| (S-\widetilde{S}) u_{\varrho h} \|_{X^*} .
  \]
  Since $\overline{u} \in X$, we can further estimate
  \begin{eqnarray*}
    \| \widetilde{u}_{\varrho h} - u_{\varrho h} \|_{L^2(Q)}
    & \leq & c \, \varrho \, h^{-1} \, \Big[
             \| (S-\widetilde{S}) (u_{\varrho h}-\overline{u}) \|_{X^*} +
             \| (S-\widetilde{S}) \overline{u} \|_{X^*} \Big] \\
    & \leq & c \, \varrho \, h^{-1} \, \Big[
             4 \, \| u_{\varrho h} - \overline{u} \|_X + \sqrt{2} \,
             \| p_{\overline{u}} - p_{\overline{u} h} \|_Y \Big] \, ,
  \end{eqnarray*}
  where we used the boundedness of $S$ and $\widetilde{S}$.
  We note that $p_{\overline{u}} = A^{-1} B \overline{u}$, 
  and $p_{\overline{u} h} \in Y_h$ solves \eqref{eqn:pwh} with
  $w = \overline{u}$.
  For $\overline{u} \in H^1(Q)$, we can use standard arguments as well as
  \eqref{Norm X H1} to bound
  \[
    \| p_{\overline{u}} - p_{\overline{u} h} \|_Y \leq
    \| p_{\overline{u}} \|_Y =
    \| A^{-1} B \overline{u} \|_Y \leq \frac{c_2^B}{c_1^A} \,
    \| \overline{u} \|_X \leq \sqrt{2} \,
    \max \{ \sqrt{c_F} , 1 \} \, \| \overline{u} \|_{H^1(Q)} ,
  \]
  and using \eqref{Error H1} for $s=1$ we finally obtain, using
  $\varrho = h^2$,
  \[
    \| \widetilde{u}_{\varrho h} - u_{\varrho h} \|_{L^2(Q)} \, \leq \, 
    c \, h \, \| \overline{u} \|_{H^1(Q)} .
  \]
\end{proof}

\begin{theorem}
  Assume $\overline{u} \in [L^2(Q),X]_s \cap H^s(Q)$ for $s \in [0,1]$,
  and $\varrho = h^2$. Then,
  \begin{equation}\label{final estimate 0 1}
    \| \widetilde{u}_{\varrho h} - \overline{u} \|_{L^2(Q)} \leq
    c \, h^s \, \|\overline{u} \|_{H^s(Q)} .
  \end{equation}
 \end{theorem}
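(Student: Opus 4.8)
The plan is to bound $\|\widetilde u_{\varrho h}-\overline u\|_{L^2(Q)}$ via the triangle inequality
\[
\|\widetilde u_{\varrho h}-\overline u\|_{L^2(Q)} \le \|\widetilde u_{\varrho h}-u_{\varrho h}\|_{L^2(Q)} + \|u_{\varrho h}-\overline u\|_{L^2(Q)},
\]
where $u_{\varrho h}\in X_h$ is the (non-computable) Galerkin solution of \eqref{VF parabolic S FEM}. The second term is already controlled by Theorem~\ref{Theorem1}, whose hypotheses cover the whole range $s\in[0,1]$ (the endpoint $s=1$ being the overlap of its two regimes), so that $\|u_{\varrho h}-\overline u\|_{L^2(Q)}\le c\,h^s\,\|\overline u\|_{H^s(Q)}$ for $\varrho=h^2$. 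It therefore remains to show that the perturbation term $\|\widetilde u_{\varrho h}-u_{\varrho h}\|_{L^2(Q)}$ obeys the same bound, and I would do this by establishing it at the two endpoints $s=0$ and $s=1$ and then interpolating.

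For $s=1$, the required estimate $\|\widetilde u_{\varrho h}-u_{\varrho h}\|_{L^2(Q)}\le c\,h\,\|\overline u\|_{H^1(Q)}$ is precisely Lemma~\ref{Lemma Error rho h}. For $s=0$, I would test \eqref{VF parabolic S FEM} with $v_h=u_{\varrho h}$ and \eqref{VF parabolic S FEM pert} with $v_h=\widetilde u_{\varrho h}$; using $\langle S u_{\varrho h},u_{\varrho h}\rangle_Q=\|u_{\varrho h}\|_S^2\ge0$ and $\langle\widetilde S\widetilde u_{\varrho h},\widetilde u_{\varrho h}\rangle_Q\ge0$ together with the Cauchy--Schwarz inequality in $L^2(Q)$ yields the stability bounds $\|u_{\varrho h}\|_{L^2(Q)}\le\|\overline u\|_{L^2(Q)}$ and $\|\widetilde u_{\varrho h}\|_{L^2(Q)}\le\|\overline u\|_{L^2(Q)}$, whence $\|\widetilde u_{\varrho h}-u_{\varrho h}\|_{L^2(Q)}\le 2\,\|\overline u\|_{L^2(Q)}$.

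For fixed $h$ and $\varrho=h^2$, the solution maps $\overline u\mapsto u_{\varrho h}$ and $\overline u\mapsto\widetilde u_{\varrho h}$ are linear and bounded, hence so is the error map $F_h\colon\overline u\mapsto\widetilde u_{\varrho h}-\overline u$. Combining the two endpoint estimates above with Theorem~\ref{Theorem1} gives $\|F_h\|_{\mathcal L(L^2(Q),L^2(Q))}\le c$ and, using the embedding \eqref{Norm X H1}, $\|F_h\|_{\mathcal L(X\cap H^1(Q),L^2(Q))}\le c\,h$ with the $H^1(Q)$-norm on the domain and $c$ independent of $h$. A space interpolation argument for this linear operator between the endpoint spaces $L^2(Q)$ and $X\cap H^1(Q)$ --- the same bookkeeping already used at the end of the proof of Theorem~\ref{Theorem1}, cf.\ \cite{LSY:Tartar:2007a} --- then produces $\|F_h\overline u\|_{L^2(Q)}\le c\,h^s\,\|\overline u\|_{H^s(Q)}$ for $\overline u\in[L^2(Q),X]_s\cap H^s(Q)$ and $s\in[0,1]$, which is \eqref{final estimate 0 1}.

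The only delicate point I expect is this last step: one must ensure that on the intersection space $[L^2(Q),X]_s\cap H^s(Q)$ the relevant interpolation norm is dominated by $\|\overline u\|_{H^s(Q)}$, so that the $h$-explicit interpolation of the endpoint bounds yields exactly the factor $h^s\|\overline u\|_{H^s(Q)}$ without extra boundary- or initial-condition contributions, and with an $h$-independent constant. Since this is identical to the situation already handled in the proof of Theorem~\ref{Theorem1}, it transfers without change, and the remaining ingredients (the $s=0$ stability bound and Lemma~\ref{Lemma Error rho h}) are routine.
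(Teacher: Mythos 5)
Your proposal is correct and follows essentially the same route as the paper: the $s=1$ endpoint from Theorem~\ref{Theorem1} combined with Lemma~\ref{Lemma Error rho h}, an $s=0$ stability bound obtained by testing the (perturbed) scheme with its own solution and using $\langle \widetilde{S}w,w\rangle_Q\ge 0$, and a space interpolation argument in between. The only cosmetic difference is at $s=0$, where the paper tests \eqref{VF parabolic S FEM pert} with $v_h=\widetilde{u}_{\varrho h}$ and reads off $\|\widetilde{u}_{\varrho h}-\overline{u}\|_{L^2(Q)}\le\|\overline{u}\|_{L^2(Q)}$ in one step, while you assemble the same bound (with a larger constant) from two stability estimates and the triangle inequality.
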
 

  \begin{proof}
    For $s = 1$ the assertion is an immediate consequence of
    Theorem \ref{Theorem1} and Lemma \ref{Lemma Error rho h}.
    Now we consider \eqref{VF parabolic S FEM pert} for
  $v_h = \widetilde{u}_{\varrho h} $,
  \[
    \varrho \, \langle \widetilde{S} \widetilde{u}_{\varrho h} ,
    \widetilde{u}_{\varrho h} \rangle_Q +
    \langle \widetilde{u}_{\varrho h} - \overline{u} ,
    \widetilde{u}_{\varrho h} - \overline{u} \rangle_{L^2(Q)} = 
    \langle \overline{u} - \widetilde{u}_{\varrho h} ,
    \overline{u} \rangle_{L^2(Q)} ,
  \]
  from which we immediately conclude
  \[
    \| \widetilde{u}_{\varrho h} - \overline{u} \|_{L^2(Q)} \leq
    \| \overline{u} \|_{L^2(Q)} .
  \]
  The assertion then again follows by a space interpolation argument.
\end{proof}

\noindent
The error estimate as given in \eqref{final estimate 0 1} covers
in particular the case when the target is either discontinuous, or
does not satisfy the required boundary or initial conditions.
It remains to consider the case when the target $\overline{u}$ is
smooth. As in the proof of Lemma \ref{Lemma Error rho h}, and 
using \eqref{Error H1} for $s=2$, we now have, recall $\varrho = h^2$,
\begin{eqnarray*}
  \| \widetilde{u}_{\varrho h} - u_{\varrho h} \|_{L^2(Q)}
  & \leq & c \, \varrho \, h^{-1} \, \Big[
           4 \, \| u_{\varrho h} - \overline{u} \|_X + \sqrt{2} \,
           \| p_{\overline{u}} - p_{\overline{u} h} \|_Y \Big] \\
  & \leq & c_1 \, h^2 \, \| \overline{u} \|_{H^2(Q)} + c_2 \, h \,
           \| p_{\overline{u}} - p_{\overline{u} h} \|_Y \, .
\end{eqnarray*}
When using the approximation result as given in
\cite[Theorem 3.3]{LSY:Steinbach:2015a} we have
\begin{equation}\label{space time FEM error Y}
  \| p_{\overline{u}} - p_{\overline{u} h} \|_Y \leq c \, h \,
  \| p_{\overline{u}} \|_{H^2(Q)},
\end{equation}
i.e., we obtain
\begin{equation}\label{final estimate 2 p}
 \| \widetilde{u}_{\varrho h} - u_{\varrho h} \|_{L^2(Q)}
 \leq c \, h^2 \, \Big[ \| \overline{u} \|_{H^2(Q)} +
 \| p_{\overline{u}} \|_{H^2(Q)} \Big] .
\end{equation}
While the error estimate \eqref{space time FEM error Y} holds for any
admissible decomposition of the space-time domain $Q$ into simplicial
finite elements, in addition to $\overline{u} \in X \cap H^2(Q)$, we have
to assume $p_{\overline{u}} = A^{-1} B \overline{u} \in H^2(Q)$,
i.e., $\overline{u} \in H^{2,3}(Q)$. This additional regularity requirement
in time is due to the finite element error estimate
\eqref{space time FEM error Y} which does not reflect the anisotropic
behavior in space and time of the norm in $Y=L^2(0,T;H^1_0(\Omega))$.
However, and as already discussed in \cite[Corollary 4.2]{LSY:Steinbach:2015a},
we can improve the error estimate \eqref{space time FEM error Y} under
additional assumptions on the underlying space-time finite element mesh.
In fact, when considering as in \cite[Section 4]{LSY:Steinbach:2015a}
right-angled space-time finite elements, or space-time tensor product
meshes, instead of \eqref{space time FEM error Y} we obtain the error
estimate
\begin{equation}\label{space time FEM error Y improved}
  \| p_{\overline{u}} - p_{\overline{u} h} \|_Y \leq c \, h \,
  | \nabla_x p_{\overline{u}} |_{H^1(Q)},
\end{equation}
when assuming $\nabla_x p_{\overline{u}} \in H^1(Q)$ for
$p_{\overline{u}} = A^{-1} B \overline{u} $,, i.e., there are no
second order time derivatives yet. This is the reason to further conclude
the bound
\[
  \| p_{\overline{u}} - p_{\overline{u} h} \|_Y \leq c \, h \,
  \| \overline{u} \|_{H^2(Q)},
\]
and hence,
\begin{equation}\label{final estimate 2 p improved}
 \| \widetilde{u}_{\varrho h} - u_{\varrho h} \|_{L^2(Q)}
 \leq c \, h^2 \, \| \overline{u} \|_{H^2(Q)} 
\end{equation}
follows, when assuming $\overline{u} \in X \cap H^2(Q)$. Now,
interpolating \eqref{final estimate 0 1} for $s=1$ and
\eqref{final estimate 2 p improved}, we conclude
\begin{equation}\label{final estimate 1 2}
 \| \widetilde{u}_{\varrho h} - u_{\varrho h} \|_{L^2(Q)}
 \leq c \, h^s \, \| \overline{u} \|_{H^s(Q)} \quad
 \mbox{for} \; \overline{u} \in X \cap H^s(Q), \; s \in [1,2]
\end{equation}
that together with estimate \eqref{Th1Estimate} from Theorem~\ref{Theorem1}
finally gives
\begin{equation}\label{FinalEstimate0s}
  \| \widetilde{u}_{\varrho h} - \overline{u} \|_{L^2(Q)}
  \leq c \, h^s \, \| \overline{u} \|_{H^s(Q)}.
\end{equation}
While we can prove this result for some structured space-time finite element
meshes only, numerical experiments indicate that \eqref{FinalEstimate0s}
remains true for any admissible decomposition
of the space-time domain into simplicial finite elements.

\section{Numerical results}
\label{sec:NumericalResults}
In the numerical experiments, we choose the spatial domain $\Omega=(0,1)^n$
with $n=2$ (Subsection~\ref{subsec:2d}) and $n=3$
(Subsection~\ref{subsec:3d}), and final time $T=1$,
resulting in the $n+1$-dimensional space-time cylinder $Q = (0,1)^{n+1}$. 
We follow the space-time finite element method on fully 
unstructured simplicial meshes as considered in
\cite{LSY:LangerSteinbachTroeltzschYang:2021c} for the coupled
optimality system of the parabolic distributed control problem
\eqref{Parabolic minimization problem}. This finally leads to the solution
of a saddle-point system that is nothing but the discrete version of
\eqref{rOS}:
Find the nodal parameter vectors $\underline{p} \in \mathbb{R}^{M_Y}$
($M_Y = \text{dim}(Y_h)$) 
and $\underline{u} \in \mathbb{R}^{M_X}$ ($M_X = \text{dim}(X_h)$) 
such that
\begin{equation}
\label{saddle-point system} 
\begin{pmatrix} \varrho^{-1} A_h & B_h \\ 
                B_h^\top & - M_h 
\end{pmatrix}
\begin{pmatrix} \underline{p}  \\  \underline{u} \end{pmatrix} =
\begin{pmatrix} \underline{0} \\ -\underline{f} \end{pmatrix},
\end{equation}
where the finite element matrices $A_h$, $B_h$, and  $M_h$  
correspond to the bilinear forms \eqref{Definition_A} and
\eqref{Definition_B}, and to the $L_2(Q)$ inner product, respectively.
The matrices $A_h \in {\mathbb{R}}^{M_Y \times M_Y}$ and
$M_h \in {\mathbb{R}}^{M_X \times M_X}$ are symmetric and positive definite,
while the matrix $B_h \in {\mathbb{R}}^{M_Y \times M_X}$ is in general
rectangular.
The load vector $\underline{f} \in {\mathbb{R}}^{N_X}$ is computed from
the given target $\overline{u}$ as usual. We mention that the symmetric,
but indefinite system \eqref{saddle-point system} is equivalent to solving the 
related Schur complement system  
\[
  \left( \varrho B_h^\top A_h^{-1} B_h + M_h \right)
  \underline{u} = \underline{f}
\]
that corresponds to \eqref{VF parabolic S FEM pert}. Here, the symmetric
but indefinite system \eqref{saddle-point system} is simply solved by
the ILU(0) preconditioned GMRES method; 
see \cite{LSY:LangerSteinbachTroeltzschYang:2021c}. 
We stop the GMRES iteration when the
relative residual error of the preconditioned system is reduced by a factor
$10^8$. 

%
%
\subsection{Two space dimensions}
\label{subsec:2d}
In the first example (Example~4.1.1), we consider the smooth target
\begin{equation}\label{Example 1}
  \overline{u}(x,t)=\sin(\pi x_1)\sin(\pi x_2)\sin(\pi t)
\end{equation}
where we can apply the error estimate \eqref{final estimate 2 p}.
As predicted, we observe a second order convergence with respect to the
mesh size $h$ when choosing $\varrho=h^2$; see Table~\ref{tab:ex2smoothtar}.

\begin{table}
  \centering
  \begin{tabular}{|l|l|ll|}
    \hline
    $h$ & $\varrho \,(=h^2)$&
    $\|\widetilde{u}_{\varrho h}-\overline{u}\|_{L^2(Q)}$ & eoc  \\ \hline
    $2^{-2}$& $2^{-4}$  & $2.2380$e$-1$& \\ 
    $2^{-3}$& $2^{-6}$  & $9.0449$e$-2$&$1.31$ \\ 
    $2^{-4}$& $2^{-8}$  & $2.6491$e$-2$&$1.77$ \\ 
    $2^{-5}$& $2^{-10}$ & $6.9335$e$-3$&$1.93$\\ 
    $2^{-6}$& $2^{-12}$ & $1.7613$e$-3$&$1.98$\\
    $2^{-7}$& $2^{-14}$& $4.4352$e$-4$&$1.99$\\    
    $2^{-8}$& $2^{-16}$ & $1.0600$e$-4$&$2.06$\\       
    $2^{-9}$& $2^{-18}$ & $2.6836$e$-5$&$1.98$\\       
    \hline
  \end{tabular}
  \caption{Error $\|\widetilde{u}_{\varrho h}-\overline{u}\|_{L^2(Q)}$
    in the case of a smooth target $\overline{u}$ given by
    \eqref{Example 1} (Example~4.1.1).}  
  \label{tab:ex2smoothtar}
\end{table}

As a second example (Example~4.1.2), we consider a piecewise linear continuous
function $\overline{u}$ being one at the mid point
$(1/2,1/2,1/2)$, and zero in all corner points of $Q=(0,1)^3$.
In this case, we have $\overline{u} \in X \cap H^{3/2-\varepsilon}(Q)$,
$\varepsilon >0$, and we observe $1.5$ as
the order of convergence, see Table~\ref{tab:ex2contar}, which
corresponds to the error estimate \eqref{final estimate 1 2}.

\begin{table}
  \centering
  \begin{tabular}{|l|l|ll|}
    \hline
    $h$ & $\varrho \,(=h^2)$& $\|\widetilde{u}_{\varrho h}-\overline{u}\|_{L^2(Q)}$ & eoc  \\ \hline
    $2^{-2}$& $2^{-4}$  & $2.0231$e$-1$& \\ 
    $2^{-3}$& $2^{-6}$  & $9.1319$e$-2$&$1.15$\\ 
    $2^{-4}$& $2^{-8}$  & $3.4303$e$-2$&$1.41$\\ 
    $2^{-5}$& $2^{-10}$ & $1.2428$e$-2$&$1.46$\\ 
    $2^{-6}$& $2^{-12}$ & $4.4443$e$-3$&$1.48$\\
    $2^{-7}$& $2^{-14}$ & $1.5797$e$-3$&$1.49$\\
    $2^{-8}$& $2^{-16}$ & $5.5868$e$-4$&$1.50$\\       
    $2^{-9}$& $2^{-18}$ & $1.9786$e$-4$&$1.50$\\       
    \hline
  \end{tabular}
  \caption{Error $\|\widetilde{u}_{\varrho h}-\overline{u}\|_{L^2(Q)}$ in
    the case of a piecewise
    linear continuous target $\overline{u} \in X \cap H^{3/2-\varepsilon}(Q)$,
    $\varepsilon > 0$ (Example~4.1.2).}  
  \label{tab:ex2contar}
\end{table}

As a third example (Example~4.1.3), we take
a piecewise constant discontinuous
target $\overline{u}$ which is one in the inscribed cube
$(\frac{1}{4},\frac{3}{4})^3$, and zero elsewhere. 
In this case, we have
$\overline{u} \in H^{1/2-\varepsilon}(Q)$, $\varepsilon >0$.
From the numerical results given in Table \ref{tab:ex2tardiscon},
we observe $0.5$ for the order of convergence, as expected from the
error estimate \eqref{final estimate 0 1}. In this example, since the
target $\overline{u}$ is discontinuous, we may apply an adaptive refinement
based on the residual type error indicator as used in
\cite{LSY:LangerSteinbachTroeltzschYang:2021c}.
We compare the errors and number of degrees of freedom using both uniform and
adaptive refinements in Table \ref{tab:ex2tardisconadapt}, with respect to the
regularization parameter $\varrho$. We clearly see that for each
regularization parameter $\varrho$, 
the adaptive refinement requires less degrees of freedom to reach a similar
accuracy as for uniform refinements. 
In Figure~\ref{fig:ex2distarvis}, we plot the state $u$, the adjoint state
$p$, and the control $z$ at time $t=0.5$, and the adaptive meshes in
space-time. For comparison of the results with different regularization
terms, we refer to
the numerical results in \cite{LSY:LangerSteinbachTroeltzschYang:2021c}.

\begin{table}
  \centering
  \begin{tabular}{|l|l|ll|}
    \hline
    $h$ & $\varrho \,(=h^2)$& $\|\widetilde{u}_{\varrho h}-\overline{u}\|_{L^2(Q)}$ & eoc  \\ \hline
    $2^{-2}$& $2^{-4}$  & $2.8840$e$-1$& \\ 
    $2^{-3}$& $2^{-6}$  & $2.0871$e$-1$&$0.47$\\ 
    $2^{-4}$& $2^{-8}$  & $1.4793$e$-1$&$0.50$\\
    $2^{-5}$& $2^{-10}$ & $1.0473$e$-1$&$0.50$\\ 
    $2^{-6}$& $2^{-12}$ & $7.4108$e$-2$&$0.50$\\
    $2^{-7}$& $2^{-14}$ & $5.2425$e$-2$&$0.50$\\
    $2^{-8}$& $2^{-16}$ & $3.7079$e$-2$&$0.50$\\       
    $2^{-9}$& $2^{-18}$ & $2.6219$e$-2$&$0.50$\\       
    \hline
  \end{tabular}
  \caption{Error $\|\widetilde{u}_{\varrho h}-\overline{u}\|_{L^2(Q)}$
    in the case of a
    discontinuous target $\overline{u}\in H^{1/2-\varepsilon}(Q)$,
    $\varepsilon>0$ (EXample~4.1.3).}   
  \label{tab:ex2tardiscon}
\end{table}

\begin{table}
  \centering
  \begin{tabular}{|l|lll|ll|}
    \hline
    & \multicolumn{3}{l|}{uniform refinement}&\multicolumn{2}{l|}{adaptive
                                                 refinement} \\
    \hline
    $\varrho $& 
    $h = \varrho^{1/2}$
    & \#DOFs & $\|\widetilde{u}_{\varrho h}-\overline{u}\|_{L^2(Q)}$&\#DOFs&$\|\widetilde{u}_{\varrho
                                                                                     h}-\overline{u}\|_{L^2(Q)}$
    \\ \hline
    $2^{-4}$& $2^{-2}$  &$250$& $2.8840$e$-1$&$250$&$2.8840$e$-1$ \\ 
    $2^{-6}$& $2^{-3}$  &$1,458$& $2.0871$e$-1$&$1,230$&$2.0873$e$-1$\\ 
    $2^{-8}$& $2^{-4}$  &$9,826$& $1.4793$e$-1$&$9,948$&$1.3999$e$-1$\\
    $2^{-10}$& $2^{-5}$ &$71,874$& $1.0473$e$-1$&$34,998$&$1.0153$e$-1$\\ 
    $2^{-12}$& $2^{-6}$ &$549,250$& $7.4108$e$-2$&$230,154$&$7.2804$e$-2$\\
    $2^{-14}$& $2^{-7}$ &$4,293,378$& $5.2425$e$-2$&$1,526,400$&$5.1838$e$-2$\\
    $2^{-16}$& $2^{-8}$ &$33,949,186$& $3.7079$e$-2$&$6,196,200$&$3.6609$e$-2$\\       
    $2^{-18}$& $2^{-9}$ &$270,011,394$& $2.6219$e$-2$&$31,419,720$&$2.5824$e$-2$\\       
    \hline
  \end{tabular}
  \caption{Comparison of the error 
     $\|\widetilde{u}_{\varrho h}-\overline{u}\|_{L^2(Q)}$ 
     and the number of degrees of freedoms 
     in the case of a discontinuous target
     $\overline{u}\in H^{1/2-\varepsilon}(Q)$, $\varepsilon >0$,
    when using both uniform and adaptive refinements (Example~4.1.3).}   
  \label{tab:ex2tardisconadapt}
\end{table}

  \begin{figure}
  \centering
  \includegraphics[width=0.45\textwidth]{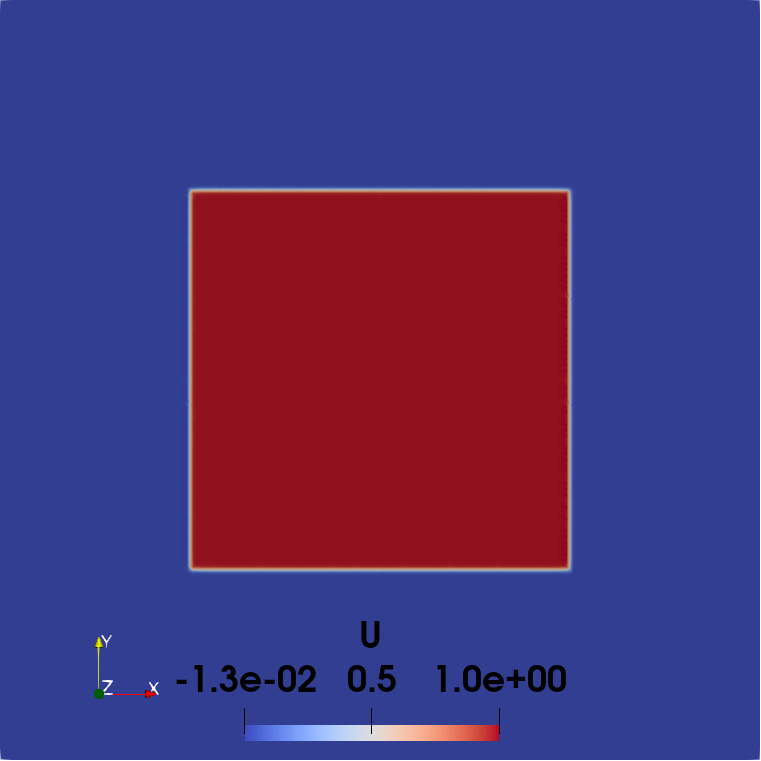}
  \includegraphics[width=0.45\textwidth]{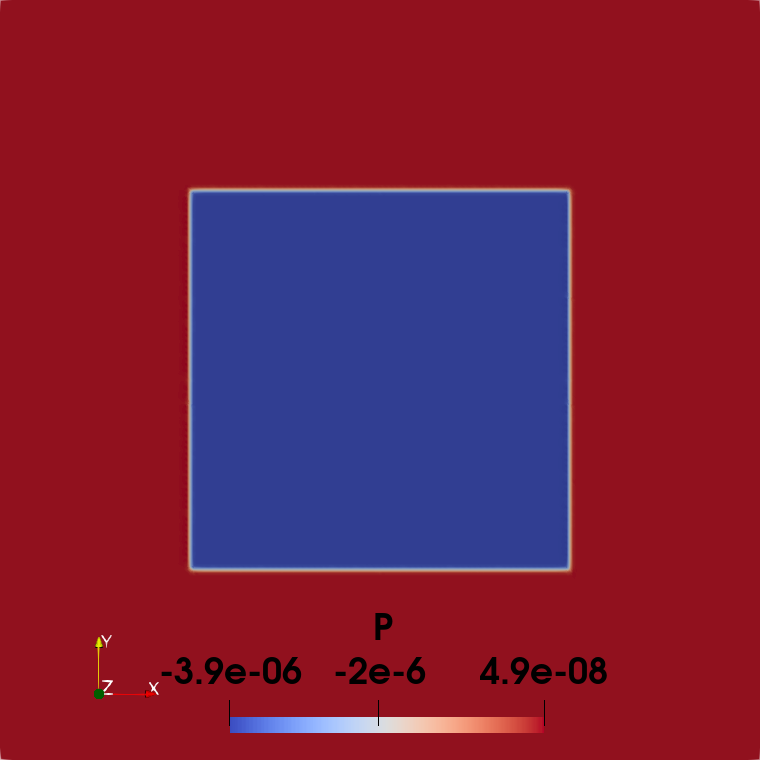}
  \includegraphics[width=0.45\textwidth]{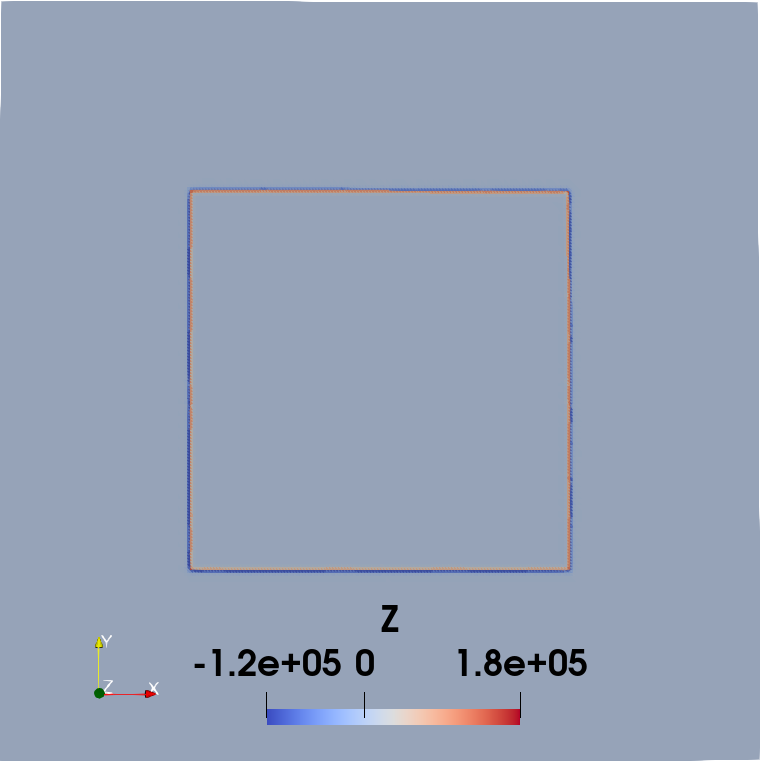}
  \includegraphics[width=0.45\textwidth]{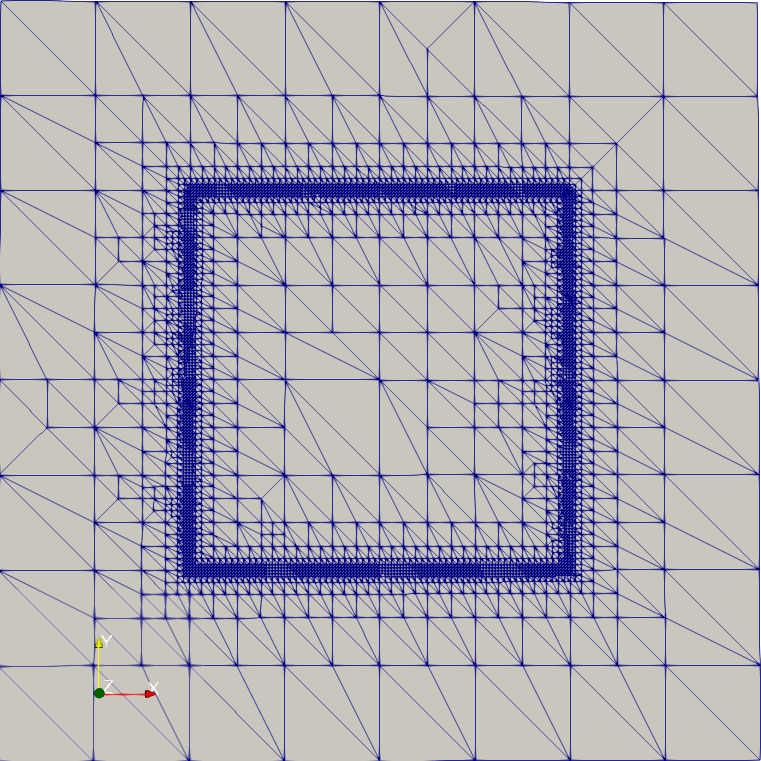}
  \caption{Visualization of the state $u_\varrho$, the adjoint state
    $p_\varrho$, the control $z_\varrho$, and
    the adaptive mesh on the cutting plane at time $t=0.5$, where the total
    \#DOFs in space-time is $3,328,617$ at the $58$th adaptive level  
    from $67$ levels (last line in Table~\ref{tab:ex2tardisconadapt})
    corresponding to the regularization parameter $\varrho=2^{-18}$
    (Example~4.1.3).}   
  \label{fig:ex2distarvis}
\end{figure} 

In the last example (Example~4.1.4) of this subsection,
we consider the discontinuous target 
\begin{equation}\label{Example noise}
  \overline{u}^\delta =
  \overline{u}+2\sqrt{2}\delta\sin(10\pi x_1)\sin(10\pi x_2)\sin(10\pi t),
\end{equation}
that contains some noise in space and time. Here, $\overline{u}$
is one in the inscribed cube $(1/4,3/4)^3 \subset (0,1)^3$ and zero else,
and $\delta > 0$ is the noise level. For this polluted target, we easily
see that $\|\overline{u}^\delta-\overline{u}\|_{L^2(Q)}=\delta$. To balance
the two error contributions we take $h=16 \, \delta^2$. This ensures an
almost optimal convergence with respect to the mesh size $h$, see
Table \ref{tab:ex2tardisconnoise1}.

\begin{table}
  \centering
  \begin{tabular}{|l|l|l|ll|}
      \hline
      $\delta$ & $h\, \left(=16\cdot\delta^2\right)$ &  $\varrho
                                                       \,\left(=h^2\right)$&
                                                                             $\|\widetilde{u}_{\varrho h}^\delta-\overline{u}\|_{L^2(Q)}$ & eoc  \\ \hline
      $2^{-3}$&$2^{-2}$ & $2^{-4}$  & $2.8841$e$-1$& \\
      $2^{-3.5}$&$2^{-3}$ &$2^{-6}$  & $2.0871$e$-1$&$0.47$ \\
      $2^{-4}$&$2^{-4}$&$2^{-8}$  & $1.4796$e$-1$&$0.50$ \\
      $2^{-4.5}$&$2^{-5}$&$2^{-10}$ & $1.0535$e$-1$&$0.49$\\
      $2^{-5}$&$2^{-6}$&$2^{-12}$ & $7.6837$e$-2$&$0.46$\\
      $2^{-5.5}$&$2^{-7}$ &$2^{-14}$ & $5.5990$e$-2$&$0.46$\\
      \hline
  \end{tabular}
  \caption{Error $\|\widetilde{u}_{\varrho h}^\delta-\overline{u}\|_{L^2(Q)}$
    in the case of a
    discontinuous target $\overline{u}\in H^{1/2-\varepsilon}(Q)$
    containing some noise level $\delta$ (Example~4.1.4).}  
  \label{tab:ex2tardisconnoise1}
\end{table}

%
%
\subsection{Three space dimensions}
\label{subsec:3d}
Now we present some numerical results for the three-dimensional
spatial domain $\Omega = (0,1)^3$, i.e., $Q=(0,1)^4$. 

In the first example (Example~4.2.1), we look at
the smooth target
\begin{equation}\label{Example 3D 1}
  \overline{u}(x,t) =
  \sin(\pi x_1)\sin(\pi x_2)\sin(\pi x_3)\sin(\pi t) .
\end{equation}
As predicted by the error estimate \eqref{final estimate 2 p},
we observe a second order convergence with respect to the
mesh size $h$ when choosing $\varrho=h^2$; see Table~\ref{tab:ex2smoothtar4D}
and Figure~\ref{fig:ex2smoothtar4D}.

\begin{table}
  \centering
  \begin{tabular}{|l|l|l|l|}
    \hline
    N(\#DOFs)&$h=(N/2)^{-1/4}$ & $\varrho \,(=h^2)$& $\|\widetilde{u}_{\varrho h}-\overline{u}\|_{L^2(Q)}$  \\ \hline
    $356$&$2.7378$e$-1$& $7.4953$e$-2$  & $2.0985$e$-1$ \\
    $630$&$2.3737$e$-1$& $5.6344$e$-2$  & $1.7263$e$-1$ \\
    $2,986$&$1.6087$e$-1$& $2.5880$e$-2$ & $1.2166$e$-1$\\
    $6,930$&$1.3034$e$-1$& $1.6988$e$-2$ & $9.3733$e$-2$\\
    $38,114$&$8.5111$e$-2$& $7.2439$e$-3$ & $4.7198$e$-2$\\
    $94,146$&$6.7890$e$-2$& $4.6091$e$-3$ & $3.2098$e$-2$\\
    $546,562$&$4.3737$e$-2$& $1.9129$e$-3$ & $1.4088$e$-2$\\
    $1,400,322$&$3.4570$e$-2$& $1.1951$e$-3$ & $8.8652$e$-3$\\
    $8,289,026$&$2.2163$e$-2$& $4.9121$e$-4$ & $3.7164$e$-3$\\
    $21,657,090$&$1.7432$e$-2$& $3.0389$e$-4$ & $2.2967$e$-3$\\
    $129,165,826$&$1.1155$e$-2$& $1.2444$e$-4$ & $9.5061$e$-4$\\
    \hline
  \end{tabular}
  \caption{Error $\| \widetilde{u}_{\varrho h}-\overline{u}\|_{L^2(Q)}$
    in the case of the smooth target $\overline{u}$ given
    by \eqref{Example 3D 1} (Example~4.2.1).}
  \label{tab:ex2smoothtar4D}
\end{table}

\begin{figure}
  \centering
  \includegraphics[width=0.8\textwidth]{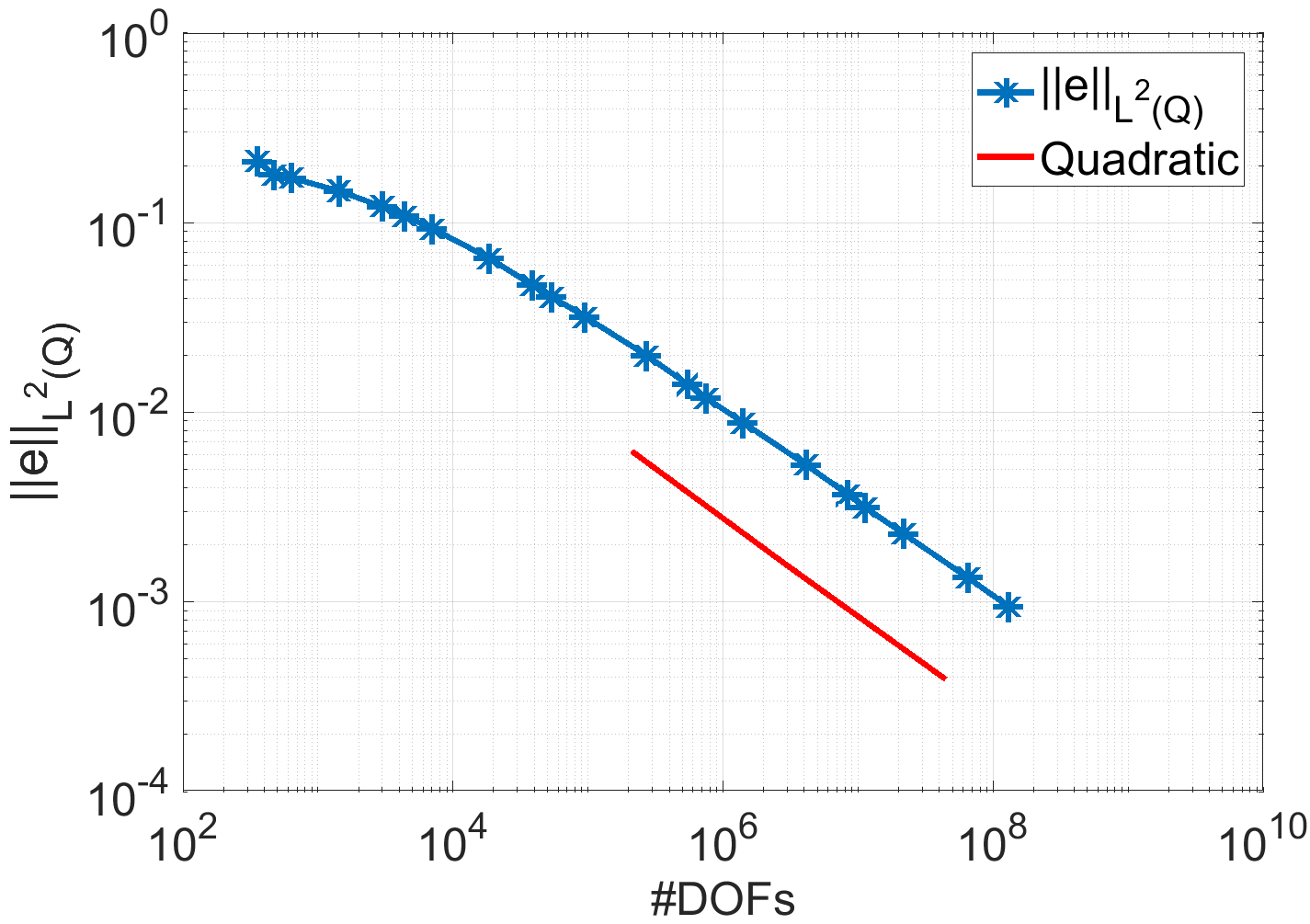}
  \caption{Error
    $\|e\|_{L^2(Q)}=\|\widetilde{u}_{\varrho h}-\overline{u}\|_{L^2(Q)}$ in
    the case of a smooth desired state
    $\overline{u}\in H_0^1(Q)\cap H^2(Q)$ in three
    space dimensions (Example~4.2.1).}   
  \label{fig:ex2smoothtar4D}
\end{figure} 

In the second example (Example~4.2.2), we take
a piecewise linear
continuous target function $\overline{u}$ being one in the mid point
$(1/2, 1/2, 1/2, 1/2)$ and zero in all corner points of $Q=(0,1)^4$.
In this case we have $\overline{u}\in X\cap H^{3/2-\varepsilon}$,
$\varepsilon > 0$, and we
observe $1.5$ as order of convergence which corresponds to the
error estimate \eqref{final estimate 1 2}, see
Table \ref{tab:ex2contar4D} and Figure~\ref{fig:ex2contar4D}.

\begin{table}
  \centering
  \begin{tabular}{|l|l|l|l|}
    \hline
    N(\#DOFs)&$h=(N/2)^{-1/4}$ & $\varrho \,(=h^2)$& $\|\widetilde{u}_{\varrho h}-\overline{u}\|_{L^2(Q)}$  \\ \hline
    $356$&$2.7378$e$-1$& $7.4953$e$-2$  & $2.1510$e$-1$ \\
    $630$&$2.3737$e$-1$& $5.6344$e$-2$  & $1.7972$e$-1$ \\    
    $2,986$&$1.6087$e$-1$& $2.5880$e$-2$ & $1.3082$e$-1$\\
    $6,930$&$1.3034$e$-1$& $1.6988$e$-2$ & $1.0510$e$-1$\\    
    $38,114$&$8.5111$e$-2$& $7.2439$e$-3$ & $6.1638$e$-2$\\    
    $94,146$&$6.7890$e$-2$& $4.6091$e$-3$ & $4.5864$e$-2$\\    
    $546,562$&$4.3737$e$-2$& $1.9129$e$-3$ & $2.4759$e$-2$\\
    $1,400,322$&$3.4570$e$-2$& $1.1951$e$-3$ & $1.7766$e$-2$\\
    $8,289,026$&$2.2163$e$-2$& $4.9121$e$-4$ & $9.2755$e$-3$\\
    $21,657,090$&$1.7432$e$-2$& $3.0389$e$-4$ & $6.5280$e$-3$\\
    $129,165,826$&$1.1155$e$-2$& $1.2444$e$-4$ & $3.3636$e$-3$\\
    \hline
  \end{tabular}
  \caption{Error $\|\widetilde{u}_{\varrho h}-\overline{u}\|_{L^2(Q)}$ in the
    case of a
    piecewise linear continuous target $\overline{u}\in
    X \cap H^{3/2-\varepsilon}(Q)$, $\varepsilon > 0$ (Example~4.2.2).}  
  \label{tab:ex2contar4D}
\end{table}

\begin{figure}
  \centering
  \includegraphics[width=0.8\textwidth]{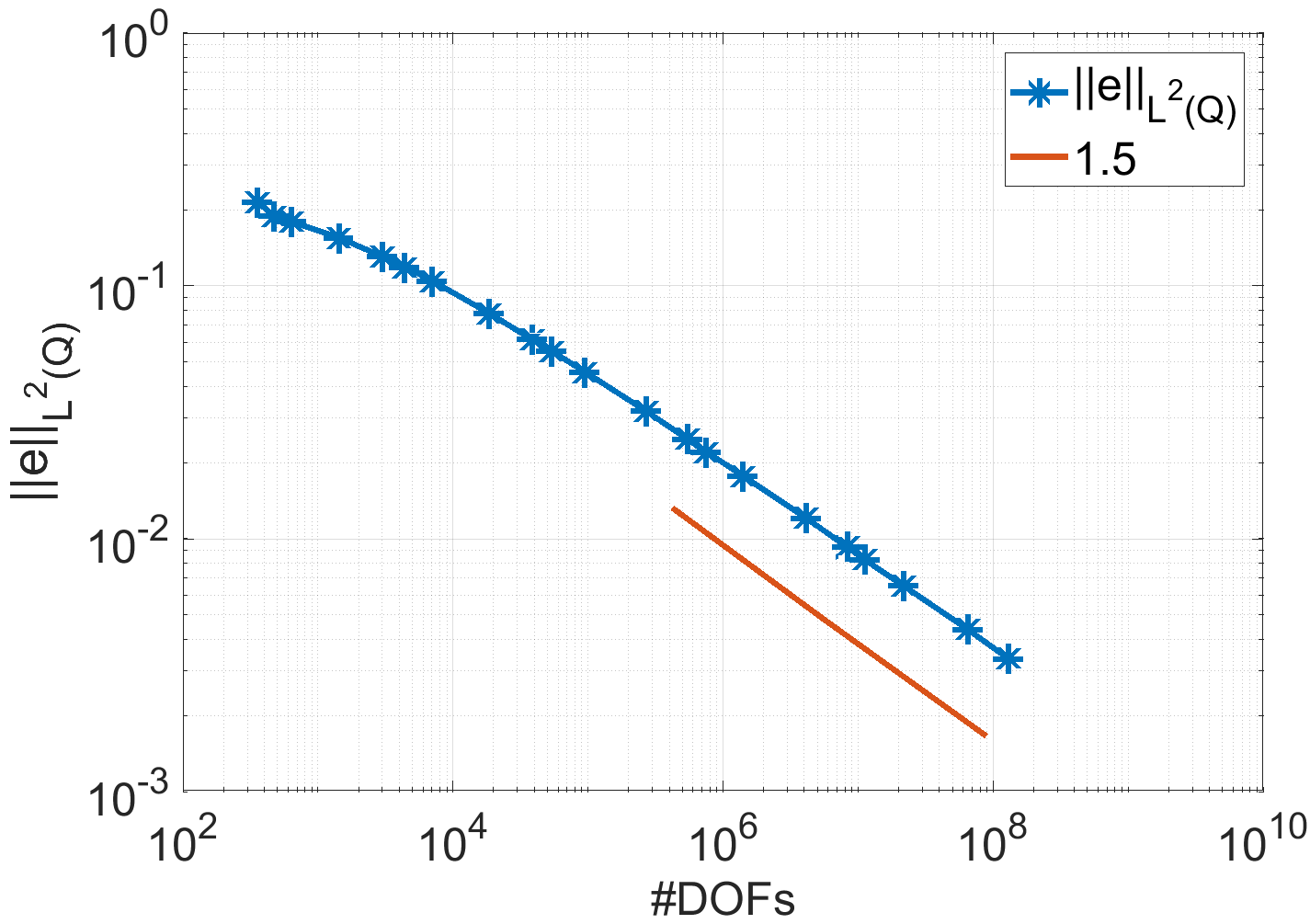}
  \caption{Error
    $\|e\|_{L^2(Q)}=\|\widetilde{u}_{\varrho h} - \overline{u}\|_{L^2(Q)}$ in
    the case of a piecewise linear continuous target $\overline{u}\in
    X \cap H^{3/2-\varepsilon}(Q)$, $\varepsilon > 0$ (Example~4.2.2).}    
  \label{fig:ex2contar4D}
\end{figure} 

In the third example (Example~4.2.3), we consider a piecewise constant discontinuous
target $\overline{u}$ which is one in the inscribed cube
$(\frac{1}{4},\frac{3}{4})^4$, and zero else. In this case we have
$\overline{u} \in H^{1/2-\varepsilon}(Q)$, $\varepsilon >0$.
From the numerical results as given in Table \ref{tab:ex2tardiscon4D}
we observe $0.5$ for the order of convergence, as expected from the
error estimate \eqref{final estimate 0 1}, see also
Figure~\ref{fig:ex2tardiscon4D}.

\begin{table}
  \centering
  \begin{tabular}{|l|l|l|l|}
    \hline
    N(\#DOFs)&$h=(N/2)^{-1/4}$ & $\varrho \,(=h^2)$& $\|\widetilde{u}_{\varrho h}-\overline{u}\|_{L^2(Q)}$  \\ \hline
    $356$&$2.7378$e$-1$& $7.4953$e$-2$  & $2.5099$e$-1$ \\
    $630$&$2.3737$e$-1$& $5.6344$e$-2$  & $1.9143$e$-1$ \\
    $2,986$&$1.6087$e$-1$& $2.5880$e$-2$ & $1.8823$e$-1$\\
    $6,930$&$1.3034$e$-1$& $1.6988$e$-2$ & $1.7500$e$-1$\\
    $38,114$&$8.5111$e$-2$& $7.2439$e$-3$ & $1.4710$e$-1$\\
    $94,146$&$6.7890$e$-2$& $4.6091$e$-3$ & $1.3313$e$-1$\\
    $546,562$&$4.3737$e$-2$& $1.9129$e$-3$ & $1.0558$e$-1$\\
    $1,400,322$& $3.4570$e$-2$ &$1.1951$e$-3$ &$9.6592$e$-2$\\
    $8,289,026$&$2.2163$e$-2$& $4.9121$e$-4$ & $7.7744$e$-2$\\
    $21,657,090$&$1.7432$e$-2$& $3.0389$e$-4$ & $6.8891$e$-2$\\
    $129,165,826$&$1.1155$e$-2$& $1.2444$e$-4$ & $5.5284$e$-2$\\
    \hline
  \end{tabular}
  \caption{Error $\|\widetilde{u}_{\varrho h}-\overline{u}\|_{L^2(Q)}$
    in the case of a
    piecewise constant and discontinuous target $\overline{u}\in
    H^{1/2-\varepsilon}(Q)$, $\varepsilon>0$ (Example~4.2.3).}  
  \label{tab:ex2tardiscon4D}
\end{table}

\begin{figure}
  \centering
  \includegraphics[width=0.8\textwidth]{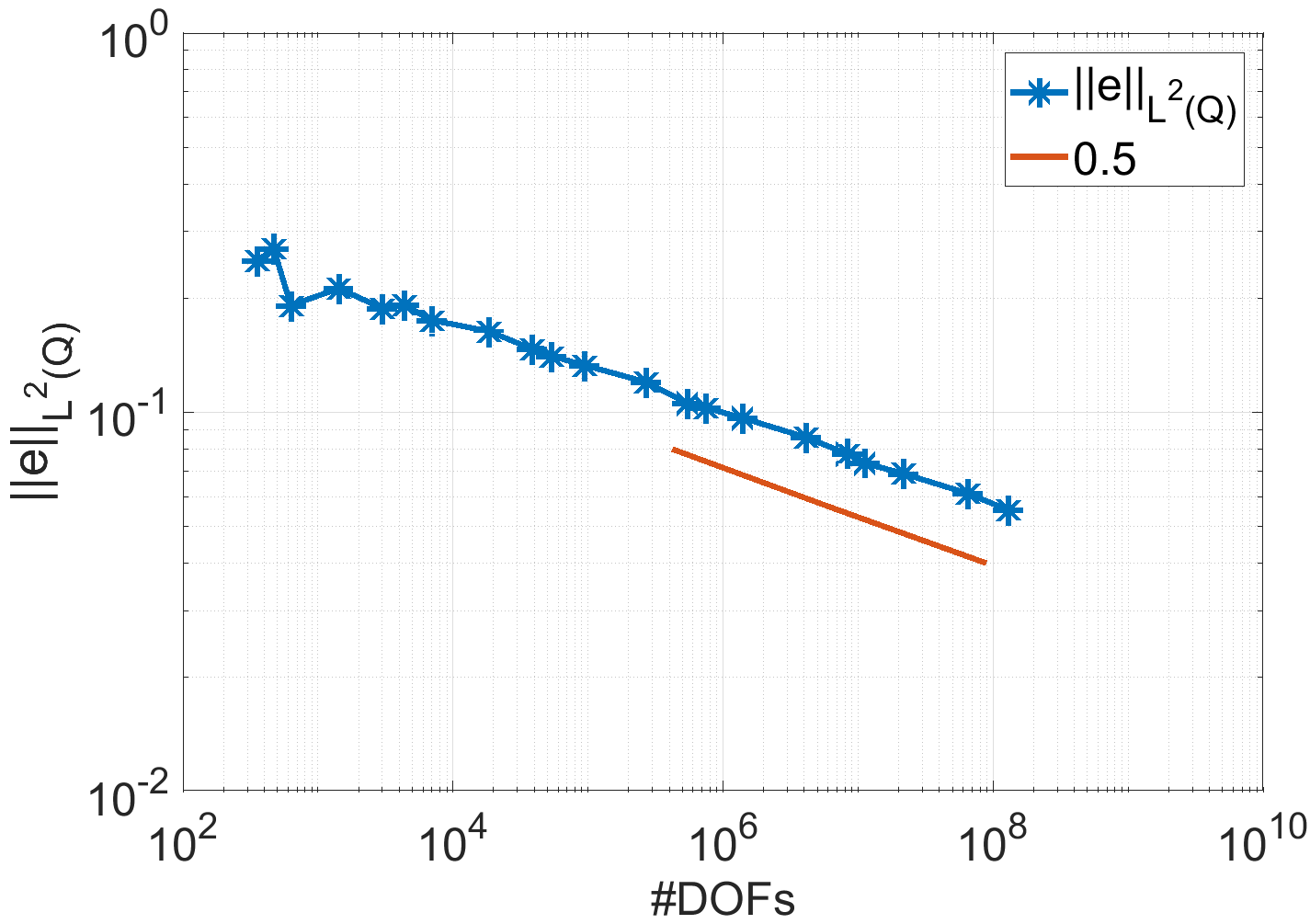}
  \caption{Error
    $\|e\|_{L^2(Q)}=\|\widetilde{u}_{\varrho h}-\overline{u}\|_{L^2(Q)}$ in
    the case of a piecewise constant and discontinuous target $\overline{u}\in
    H^{1/2-\varepsilon}(Q)$, $\varepsilon > 0$ (Example~4.2.3).}    
  \label{fig:ex2tardiscon4D}
\end{figure}


\section{Conclusions and outlook}
\label{sec:ConclusionsOutlook}
We have derived robust space-time finite element error estimates for
distributed parabolic optimal control problems with energy regularization. 
More precisely, we have estimated the $L^2(Q)$ norm of the 
error between the desired state $\overline{u}$ and the computed state
$\widetilde{u}_{\varrho h}$ depending on the regularity of the desired
state $\overline{u}$. It has been shown that the optimal convergence rate
is achieved by the proper scaling $\varrho=h^2$ between the regularization
parameter $\varrho$ and the mesh size $h$. The theoretical findings are
confirmed by several numerical examples in both two and three space
dimensions.

The theoretical results are valid for uniform mesh refinement. However, for
discontinuous targets $\overline{u}$ and targets that don't fulfil the
boundary or initial conditions, we can expect layers with steep gradients 
in the solutions as in Example~4.2.3. In this example, we have observed 
that, for a fixed $\varrho=h^2$, the adaptive version needs considerably
less unknowns to achieve the same accuracy as the corresponding uniformly
refined grid with the finest mesh-size $h$. 
Since, for adaptively refined grids, the local mesh-sizes are very different,
one can also think about a localization of the regularization parameter
$\varrho$. Another future research topic is the construction of fast and
$\varrho$ robust solvers 
for the symmetric and indefinite system \eqref{saddle-point system} 
that is equivalent to \eqref{VF parabolic S FEM pert};
see, e.g., \cite{LSY:BenziGolubLiesen:2005a,LSY:SchoeberlZulehner:2007a,LSY:SchulzWittum:2008a,LSY:Zulehner:2011a}.
Finally, the consideration of constraints imposed on the control $z_\varrho$ 
is of practical interest; see, e.g., \cite{LSY:Troeltzsch:2010a}. 


\section*{Acknowledgments}
The authors would like to acknowledge the computing support of the
supercomputer MACH--2\footnote{https://www3.risc.jku.at/projects/mach2/}
from Johannes Kepler Universit\"{a}t Linz and of the high performance
computing cluster Radon1\footnote{https://www.oeaw.ac.at/ricam/hpc}
from Johann Radon Institute for Computational and Applied Mathematics
(RICAM) on which the numerical examples are performed. 
The first and the third author were partially supported by RICAM.

\bibliography{LSY2022}

\newcommand{\noopsort}[1]{} \newcommand{\printfirst}[2]{#1}
  \newcommand{\singleletter}[1]{#1} \newcommand{\switchargs}[2]{#2#1}
\begin{thebibliography}{10}

\bibitem{LSY:BenziGolubLiesen:2005a}
M.~Benzi, G.~H. Golub, and J.~Liesen.
\newblock Numerical solution of saddle point problems.
\newblock {\em Acta Numer.}, 14:1–137, 2005.

\bibitem{LSY:BorziSchulz:2011a}
A.~Borz{\`{i}} and V.~Schulz.
\newblock {\em Computational optimization of systems governed by partial
  differential equations}, volume~8 of {\em Computational Science \&
  Engineering}.
\newblock Society for Industrial and Applied Mathematics (SIAM), 2011.

\bibitem{LSY:BrennerScott:2008a}
S.~Brenner and R.~Scott.
\newblock {\em The Mathematical Theory of Finite Element Methods}, volume~15 of
  {\em Texts in Applied Mathematics}.
\newblock Springer, New York, 2008.

\bibitem{LSY:Casas:2017a}
E.~Casas.
\newblock A review on sparse solutions in optimal control of partial
  differential equations.
\newblock {\em SeMA Journal}, 74:319--344, 2017.

\bibitem{LSY:ErnGuermond:2004a}
A.~Ern and J.-L. Guermond.
\newblock {\em Theory and Practice of Finite Elements}.
\newblock Springer-Verlag, New York, 2004.

\bibitem{LSY:HinzePinnauUlbrichUlbrich:2009a}
M.~Hinze, R.~Pinnau, M.~Ulbrich, and S.~Ulbrich.
\newblock {\em Optimization with PDE Constraints}, volume~23.
\newblock Springer-Verlag, Berlin, 2009.

\bibitem{LSY:John:2016a}
V.~John.
\newblock {\em Finite Element Methods for Incompressible Flow Problems},
  volume~51 of {\em Springer Series in Computational Mathematics}.
\newblock Springer, 2016.

\bibitem{LSY:LangerSteinbachTroeltzschYang:2021c}
U.~Langer, O.~Steinbach, F.~Tr{\"o}ltzsch, and H.~Yang.
\newblock Space-time finite element discretization of parabolic optimal control
  problems with energy regularization.
\newblock {\em SIAM J. Numer. Anal.}, 59(2):660--674, 2021.

\bibitem{LSY:LangerSteinbachYang:2022a}
U.~Langer, O.~Steinbach, and H.~Yang.
\newblock Robust discretization and solvers for elliptic optimal control
  problems with energy regularization.
\newblock {\em Comput. Meth. Appl. Math.}, 22(1):97--111, 2022.

\bibitem{LSY:Lions:1968a}
J.~L. Lions.
\newblock {\em Contr\^{o}le optimal de syst\`{e}mes gouvern\'{e}s par des
  \'{e}quations aux d\'{e}riv\'{e}es partielles}.
\newblock Dunod Gauthier-Villars, Paris, 1968.

\bibitem{LSY:NeumuellerSteinbach:2021a}
M.~Neum\"uller and O.~Steinbach.
\newblock Regularization error estimates for distributed control problems in
  energy spaces.
\newblock {\em Math. Methods Appl. Sci.}, 44:4176--4191, 2021.

\bibitem{LSY:SchoeberlZulehner:2007a}
J.~Sch\"oberl and W.~Zulehner.
\newblock Symmetric indefinite preconditioners for saddle point problems with
  applications to pde-constrained optimization problems.
\newblock {\em SIAM J. Matrix Anal. Appl.}, 29:752--773, 2007.

\bibitem{LSY:SchulzWittum:2008a}
V.~Schulz and G.~Wittum.
\newblock Transforming smoothers for pde constrained optimization problems.
\newblock {\em Comput. Visual. Sci.}, 11(4):207{--}219, 2008.

\bibitem{LSY:Steinbach:2015a}
O.~Steinbach.
\newblock Space-time finite element methods for parabolic problems.
\newblock {\em Comput. Meth. Appl. Math.}, 15(4):551--566, 2015.

\bibitem{LSY:SteinbachZank:2020a}
O.~Steinbach and M.~Zank.
\newblock Coercive space-time finite element methods for initial boundary value
  problems.
\newblock {\em Electron. Trans. Numer. Anal.}, 52:154--194, 2020.

\bibitem{LSY:Tartar:2007a}
L.~Tartar.
\newblock {\em An Introduction to Sobolev Spaces and Interpolation Spaces},
  volume~3 of {\em Lecture Notes of the Unione Matematica Italiana}.
\newblock Springer, Berlin, 2007.

\bibitem{LSY:Troeltzsch:2010a}
F.~Tr\"{o}ltzsch.
\newblock {\em Optimal control of partial differential equations: Theory,
  methods and applications}, volume 112 of {\em Graduate Studies in
  Mathematics}.
\newblock American Mathematical Society, Providence, Rhode Island, 2010.

\bibitem{LSY:Zeidler:1990a}
E.~Zeidler.
\newblock {\em Nonlinear Functional Analysis and its Applications II/B:
  Nonlinear Monotone Operators}.
\newblock Springer, New York, 1990.

\bibitem{LSY:Zulehner:2011a}
W.~Zulehner.
\newblock Nonstandard norms and robust estimates for saddle point problems.
\newblock {\em SIAM J. Matrix Anal. Appl.}, 32(2):536--560, 2011.

\end{thebibliography}
\bibliographystyle{abbrv}

\end{document}